\newtheorem{thm}{Theorem}[section]
\newtheorem{lem}[thm]{Lemma}
\newtheorem{prop}[thm]{Proposition}
\newtheorem{cor}[thm]{Corollary}
\newtheorem{conj}[thm]{Conjecture}
\newtheorem{procedure}[thm]{Procedure}
\theoremstyle{definition}
\newtheorem{example}{Example}[section]
\theoremstyle{definition}
\newtheorem{defn}[thm]{Definition}
\theoremstyle{plain}
\begin{document}

\title[The SGIS of Points in General Position]{The Asymptotic Behaviour of Symbolic Generic Initial Systems of Points in General Position}
\author{Sarah Mayes}

\maketitle  
\vspace*{-2em} 

\begin{abstract}
Consider the ideal $I \subseteq K[x,y,z]$ corresponding to points $p_1, \dots, p_r$ of $\mathbb{P}^2$.  We study the \textit{symbolic generic initial system} $\{ \text{gin}(I^{(m)}) \}_m$ of such an ideal and its behaviour as $m$ gets large.  In particular, we describe the \textit{limiting shape} of this system explicitly when $p_1, \dots, p_r$ lie in general position using the SHGH Conjecture for $r \geq 9$.  The symbolic generic initial system and its limiting shape reflects information about the Hilbert functions of fat point ideals.
\end{abstract}

\section{Introduction}

Generic initial ideals can be viewed as a coordinate-independent version of initial ideals, which carry much of the same information as the initial ideal with the added benefit of preserving, and even revealing, certain geometric information.  Given an ideal $I \subseteq K[x,y,z]$ of distinct points $p_1, \dots, p_r$ in $\mathbb{P}^2$, the reverse lexicographic generic initial ideal of $I$, $\text{gin}(I)$, can detect if a subset of the points lies on a curve of a certain degree (see \cite{EP90} or Theorem 4.4 of \cite{Green98}).  If we instead consider the ideal $I^{(m)}$ of the fat point subscheme $Z_m = mp_1+\cdots +mp_r \subseteq \mathbb{P}^2$, one might ask what $\text{gin}(I^{(m)})$ says about $Z_m$; this question motivated the work in this paper.



Despite being simple to describe, ideals $I^{(m)}$ of fat point subschemes $Z_m=m(p_1+\cdots+p_r)$ have proven difficult to understand.  For example, there are still many open problems and unresolved conjectures related to finding the Hilbert function of $I^{(m)}$ and even the degree $\alpha(I^{(m)})$ of the smallest degree element of $I^{(m)}$.  Many of the challenges in understanding the individual ideals $I^{(m)}$ can be overcome by changing one's focus to studying the general behaviour of the entire family of ideals $\{I^{(m)}\}_m$. For instance, more can be said about the Seshadri constant 
$$\epsilon(I) = \lim_{m \rightarrow \infty} \frac{\alpha(I^{(m)})}{rm}$$ 
than the invariants $\alpha(I^{(m)})$ of each ideal (see \cite{BC10} and \cite{Harbourne02} for further background on these constants).  Thus, we will explore the asymptotic behaviour of the entire symbolic generic initial system $\{ \text{gin}(I^{(m)})\}_m$ as a first step to understanding the generic initial ideals of fat point subschemes.  

To describe limiting behaviour, we define the \textit{limiting shape} $P$ of the symbolic generic initial system $\{ \text{gin}(I^{(m)}\}$ of the ideal $I \subseteq K[x,y,z]$ corresponding to an arrangement of points in $\mathbb{P}^2$ to be the limit $\lim_{m \rightarrow \infty} \frac{1}{m} P_{\text{gin}(I^{(m)})}$, where $P_{\text{gin}(I^{(m)})}$ denotes the Newton polytope of $\text{gin}(I^{(m)})$.  We will see that each of the ideals $\text{gin}(I^{(m)})$ is generated in the variables $x$ and $y$, so that $P_{\text{gin}(I^{(m)})}$, and thus $P$, can be thought of as a subset of $\mathbb{R}^2$.  One reason for studying the limiting shape of a system of monomial ideals is that it completely determines the asymptotic multiplier ideals of the system (see \cite{Howald01} and \cite{Mayes12a}).

When the point arrangement has an ideal $I$ that is a complete intersection of type $(\alpha, \beta)$ with $\alpha \leq \beta$, a special case of the main result of \cite{Mayes12a} shows that the limiting shape of the symbolic generic initial system has a boundary defined by the line through the points $(\alpha, 0)$ and $(0, \beta)$.  The main result of this paper is the following theorem describing the limiting shape of the symbolic generic initial system of an ideal of $r$ distinct points of $\mathbb{P}^2$ in general position, assuming that the SHGH Conjecture \ref{conj:shgh} holds for the case where $r \geq 9$.


\begin{thm}
\label{thm:mainthm}
Let $I \subseteq R=K[x,y,z]$ be the ideal of $r>1$ distinct points $p_1, \dots, p_r$ of $\mathbb{P}^2$ in general position and $P$ be the limiting shape of the reverse lexicographic symbolic generic initial system $\{ \text{gin}(I^{(m)})\}_m$.  Then $P$ can be characterized as follows.
\begin{enumerate}
\item[(a)]  If $r \geq 9$ and the SHGH Conjecture holds for infinitely many $m$, then $P$ has a boundary defined by the line through the points $(\sqrt{r},0)$ and $(0, \sqrt{r})$.  See Figure \ref{fig:largenumberpolytope}.
\item[(b)]  If $6 \leq r<9$, then $P$ has a boundary defined by the line through the points $(\gamma_1,0)$ and $(0, \gamma_2)$ where:
\begin{enumerate}
\item[(i)]  $\gamma_1 = \frac{12}{5}$ and $\gamma_2 = \frac{5}{2}$ when $r=6$;
\item[(ii)] $\gamma_1 = \frac{21}{8}$ and $\gamma_2 = \frac{8}{3}$ when $r=7$; and
\item[(iii)] $\gamma_1 = \frac{48}{17}$ and $\gamma_2 = \frac{17}{6}$ when $r=8$.
\end{enumerate}
\item[(c)] If $r=4$ or $r=5$, then $P$ has a boundary defined by the line through the points $(2,0)$ and $(0, \frac{r}{2})$.  If $r=2$ or $r=3$, then $P$ has a boundary defined by the line through the points $(\frac{r}{2},0)$ and $(0,2)$.
\end{enumerate}
\end{thm}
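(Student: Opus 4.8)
The strategy is to translate the problem first into a question about the asymptotic Hilbert function of $\{I^{(m)}\}_m$ and then into a volume computation on the blow-up $\pi\colon X\to\mathbb{P}^2$ at $p_1,\dots,p_r$; write $H=\pi^*(\text{line})$, $E_1,\dots,E_r$ for the exceptional divisors, and $D_w=wH-E_1-\cdots-E_r$ for $w\ge 0$. Since $R/I^{(m)}$ is Cohen--Macaulay of dimension one, a general linear form is a non-zerodivisor, so $\text{gin}(I^{(m)})$ is a strongly stable monomial ideal in $x$ and $y$ only (the fact the introduction promises), and its Newton polytope $P_{\text{gin}(I^{(m)})}\subseteq\mathbb{R}^2_{\ge 0}$ is recovered from the Hilbert function: $x^ay^b$ is standard modulo $\text{gin}(I^{(m)})$ exactly when $a<\Delta h_{R/I^{(m)}}(a+b)$, with $\Delta h$ the first difference. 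Dividing by $m$ — the limit existing because $\text{gin}(I^{(m)})\text{gin}(I^{(m')})\subseteq\text{gin}(I^{(m+m')})$ — the complement of $P$ is $\{(u,v)\ge 0:u\le\phi(u+v)\}$ with $\phi(w)=\lim_m\tfrac1m\Delta h_{R/I^{(m)}}(\lfloor mw\rfloor)$. Using $h_{R/I^{(m)}}(t)=\binom{t+2}{2}-h^0(X,tH-mE_1-\cdots-mE_r)$ together with $h^0(X,mD_w)\sim\tfrac{m^2}{2}\operatorname{vol}(D_w)$, one gets $\phi(w)=\tfrac{d}{dw}\bigl(\tfrac12w^2-\tfrac12\operatorname{vol}(D_w)\bigr)$. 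Hence $P$ has boundary the segment from $(\gamma_1,0)$ to $(0,\gamma_2)$ precisely when $\operatorname{vol}(D_w)=0$ for $w\le\gamma_1$, $\operatorname{vol}(D_w)$ is a quadratic in $w$ with root $\gamma_1$ on $[\gamma_1,\gamma_2]$, and $\operatorname{vol}(D_w)=w^2-r$ for $w\ge\gamma_2$; so the whole theorem reduces to computing $\operatorname{vol}(D_w)$, i.e.\ the Zariski decomposition of $D_w$.

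For (a) with $r=9$: $-K_X=D_3$ is nef, since the cubic through nine general points is irreducible with $(-K_X)^2=0$; thus $D_w=(w-3)H+D_3$ is nef with $\operatorname{vol}(D_w)=w^2-9$ for $w\ge 3$, while $D_w\cdot D_3=3w-9<0$ for $w<3$ shows $D_w$ is not pseudo-effective there, so $\operatorname{vol}(D_w)=0$; hence $\gamma_1=\gamma_2=3=\sqrt9$. For $r\ge 10$, assuming SHGH holds for infinitely many $m$: every $(-1)$-curve $C=eH-\sum n_iE_i$ satisfies $\sum n_i=3e-1$, so $C\cdot(dH-mE_1-\cdots-mE_r)=e(d-3m)+m\ge 1$ once $d\ge 3m$, and $\alpha(I^{(m)})\sim m\sqrt r>3m$; therefore SHGH forces $\dim(I^{(m)})_d$ to equal its expected value $\max\!\bigl(\binom{d+2}{2}-r\binom{m+1}{2},0\bigr)$ in the entire relevant range of $d$, giving $\operatorname{vol}(D_w)=\max(w^2-r,0)$ and $\alpha(I^{(m)})/m\to\sqrt r$, so again $\gamma_1=\gamma_2=\sqrt r$. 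In either case the complement of $P$ is the triangle $\{u+v\le\sqrt r\}$, and $P$ has boundary the line through $(\sqrt r,0)$ and $(0,\sqrt r)$.

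For (b)--(c), $2\le r\le 8$, the surface $X$ is del Pezzo with finitely many $(-1)$-curves and no conjecture is needed. For each $r$ I would isolate the symmetric family of pairwise disjoint $(-1)$-curves $C_1,\dots$ giving the last inequality $D_w\cdot C_i\ge 0$ to become satisfied as $w$ grows — for instance the six conics $C_i=2H-\sum_{j\neq i}E_j$ when $r=6$, the seven cubics $C_i=3H-E_i-\sum_{j=1}^7E_j$ when $r=7$, the eight sextics $C_i=6H-3E_i-2\sum_{j\neq i}E_j$ when $r=8$ (and, for $r\le 5$, the relevant lines, or the unique $(-1)$-conic $2H-E_1-\cdots-E_5$ when $r=5$, in their role) — and let $\gamma_2$ be the value of $w$ at which that inequality is an equality, so that $D_w$ is nef for $w\ge\gamma_2$. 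A short computation shows $C_i\cdot C_j=0$ for $i\neq j$, so on $[\gamma_1,\gamma_2]$ the negative part of the Zariski decomposition of $D_w$ is $a(w)(C_1+\cdots)$ with $a(w)$ linear in $w$, determined by $(D_w-a\sum C_i)\cdot C_j=0$; back-substitution yields $\operatorname{vol}(D_w)=c^2(w-\gamma_1)^2$ on $[\gamma_1,\gamma_2]$, whose root $\gamma_1$ equals $\lim\alpha(I^{(m)})/m$. Thus $\phi(w)=w-c^2(w-\gamma_1)$ is linear on that interval, the complement of $P$ is the triangle with legs $\gamma_1,\gamma_2$, and evaluating reproduces the pairs in (b) and the cases in (c) (with $r=4$ degenerate: $D_2=2H-\sum_4E_i$ is already nef with $D_2^2=0$, so $\gamma_1=\gamma_2=2$, just like $r=9$). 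One could equally bypass the Zariski language and feed in the classically known Hilbert functions of uniform fat point schemes on del Pezzo surfaces.

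The hardest step is the $r\ge 10$ portion of (a): converting ``SHGH holds for infinitely many $m$'', a statement about the cohomology of individual integral divisors, into the asymptotic identity $\operatorname{vol}(D_w)=\max(w^2-r,0)$ — equivalently, that $\sqrt r\,H-E_1-\cdots-E_r$ is nef, a Nagata-type conclusion. This requires a careful check that no $(-1)$-curve or other special linear system disturbs the expected dimension in the degrees $d\approx m\sqrt r$ that actually control the limit, together with control of the lower-order error when passing from $h^0$ to $\operatorname{vol}$. By contrast the del Pezzo cases $r\le 8$ reduce to a finite, mechanical check of intersection numbers and Zariski decompositions, and the first-paragraph reduction is essentially the machinery already developed for the complete-intersection case in \cite{Mayes12a}.
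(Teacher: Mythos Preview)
Your approach is correct but takes a genuinely different route from the paper. You reduce everything to the volume function $\operatorname{vol}(D_w)$ on the blow-up and compute it on the full interval $[\gamma_1,\gamma_2]$ via Zariski decomposition, reading off the shape of $Q$ from the derivative $\phi(w)$. The paper instead proves a shortcut (Corollary~\ref{cor:InterceptsDetermineShape}): since $\operatorname{area}(Q)=r/2$ always (Lemma~\ref{lem:volume}) and $P$ is convex, it suffices to compute \emph{only} the two intercepts $\gamma_1=\lim\alpha(m)/m$ and $\gamma_2=\lim(\text{top generator degree})/m$ and check $\gamma_1\gamma_2=r$; convexity then forces $Q$ to equal the triangle with those legs. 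So the paper never needs $\operatorname{vol}(D_w)$ on the interior of $[\gamma_1,\gamma_2]$ or the function $\phi$ at all --- its ``cycle subtraction'' procedure (Procedure~\ref{proc:findH}) is used only to locate $\gamma_1$, and an $h^0$ calculation in two adjacent degrees locates $\gamma_2$. This is considerably shorter for the present theorem, where the answer is always a triangle. Your route buys generality: it would compute $Q$ directly even when the boundary is not a single segment (as in the paper's final example), and it treats parts (b) and (c) uniformly, whereas the paper defers (c) to a separate result on points on a conic. Your separate handling of $r=9$ via nefness of $-K_X$, without invoking SHGH, is a nice strengthening the paper does not isolate. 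The technical caveat you flag at the end --- passing from ``SHGH for infinitely many $m$'' and $h^0$-asymptotics to the derivative $\phi$ --- is real for your method; the paper sidesteps it because Proposition~\ref{prop:gensofgin} gives the \emph{exact} generators of $\text{gin}(I^{(m)})$ whenever SHGH holds, so the intercepts are read off directly with no limiting-derivative argument.
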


\begin{figure}
\begin{center}
\includegraphics[width=4cm]{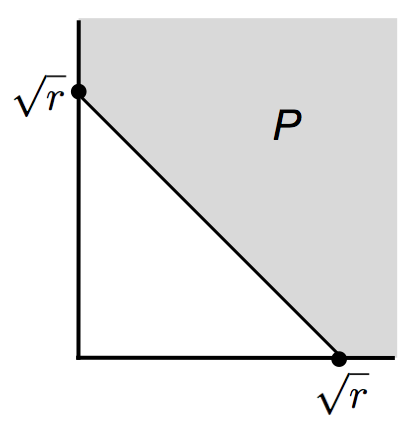}
\end{center}
\caption{The limiting shape $P$ of $\{\text{gin}(I^{(m)})\}_m$ where $I$ is the ideal of $r \geq 9$ points in general position, assuming that the SHGH Conjecture holds for infinitely many $m$.}
\label{fig:largenumberpolytope}
\end{figure}

Precisely what information is carried by the limiting shape of the symbolic generic initial system of other point arrangements is still uncertain.  While one can prove that the $x$-intercept of the boundary of $P$ is equal to $r \epsilon(I)$ (see Section \ref{sec:prelim}), that the $y$-intercept reflects the asymptotic behaviour of the regularity of the ideals $I^{(m)}$ (see \cite{Mayes12a}), and that the volume under $P$ is equal to $\frac{r}{2}$ (Proposition \ref{prop:volumesequal}), there is likely additional geometric information encoded within $P$.  Two important questions concern the form of $P$: is $P$ always a polytope, and what does it mean for the boundary of $P$ to be defined by a certain number of line segments?

Following background information in Section \ref{sec:prelim}, the three parts of Theorem \ref{thm:mainthm} are proven in Sections \ref{sec:largenumber}, \ref{sec:678}, and \ref{sec:smallnumber}.  The final section contains an example demonstrating that there are point arrangements for which the boundary of the limiting polytope of the symbolic generic initial system is not defined by a single line segment.

\section*{Acknowledgements}

I thank Karen Smith for introducing this problem to me and for many useful discussions.  I also thank Brian Harbourne and Susan Cooper for helping me to learn about fat points.
\section{Preliminaries}
\label{sec:prelim}

In this section we will introduce some notation, definitions, and preliminary results related to fat points in $\mathbb{P}^2$, generic initial ideals, and systems of ideals. Unless stated otherwise, $R=K[x,y,z]$ is the polynomial ring in three variables over a field $K$ of characteristic 0 with the standard grading and some fixed term order $>$ with $x>y>z$.


\subsection{Fat Points in $\mathbb{P}^2$}
\label{sec:fatpoints}

\begin{defn}
Let $p_1, \dots, p_r$ be distinct points of $\mathbb{P}^2$, $I_j$ be the ideal of $K[\mathbb{P}^2] = R$ consisting of all forms vanishing at the point $p_j$, and $I = I_1 \cap \cdots \cap I_r$ be the ideal of the points $p_1, \dots, p_r$.  A \textbf{fat point subscheme} $Z = m_1p_1 + \cdots + m_rp_r$, where the $m_i$ are nonnegative integers, is the subscheme of $\mathbb{P}^2$ defined by the ideal $I_Z = I_1^{m_1} \cap \cdots \cap I_r^{m_r}$ consisting of forms that vanish at the points $p_i$ to multiplicity at least $m_i$.  When $m_i = m$ for all $i$, we say that $Z$ is \textbf{uniform}; in this case, $I_Z$ is equal to the $m^{\text{th}}$ symbolic power of $I$, $I^{(m)}$.
\end{defn}



The following lemma relates the symbolic and ordinary powers of $I$ in the case we are interested in (see, for example, Lemma 1.3 of \cite{AV03}).

\begin{lem} 
\label{lem:symbpower}
If $I$ is the ideal of distinct points in $\mathbb{P}^2$,
$$(I^m)^{\text{sat}} = I^{(m)},$$
where $J^{\text{sat}} = \bigcup_{k \geq 0}(J:\mathfrak{m}^k)$ denotes the saturation of $J$.
\end{lem}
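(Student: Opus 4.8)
The plan is to prove the two inclusions separately, using that by the definition of a uniform fat point subscheme given above one has $I^{(m)} = I_1^m \cap \cdots \cap I_r^m$, and that each $I_j$ is a homogeneous prime of height $2$ in $R$, hence strictly contained in the irrelevant ideal $\mathfrak{m}=(x,y,z)$. For the inclusion $(I^m)^{\text{sat}} \subseteq I^{(m)}$: since $I = I_1 \cap \cdots \cap I_r \subseteq I_j$ for every $j$, we get $I^m \subseteq I_j^m$ and hence $I^m \subseteq \bigcap_j I_j^m = I^{(m)}$, so $(I^m)^{\text{sat}} \subseteq (I^{(m)})^{\text{sat}}$. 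It then suffices to observe that $I^{(m)}$ is already saturated: each $I_j^m$ is $I_j$-primary, so $\mathrm{Ass}(R/I^{(m)}) \subseteq \bigcup_j \mathrm{Ass}(R/I_j^m) = \{I_1,\dots,I_r\}$, and since no $I_j$ equals $\mathfrak{m}$, the irrelevant ideal is not associated to $R/I^{(m)}$; equivalently $(I^{(m)}:\mathfrak{m}^k) = I^{(m)}$ for all $k$, so $(I^{(m)})^{\text{sat}} = I^{(m)}$.

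For the reverse inclusion $I^{(m)} \subseteq (I^m)^{\text{sat}}$, I would show that the finitely generated graded module $M = I^{(m)}/I^m$ is supported only at $\mathfrak{m}$, hence has finite length and is annihilated by $\mathfrak{m}^k$ for some $k \gg 0$; then $\mathfrak{m}^k I^{(m)} \subseteq I^m$ yields $I^{(m)} \subseteq (I^m : \mathfrak{m}^k) \subseteq (I^m)^{\text{sat}}$. To see that $M_\mathfrak{p} = 0$ for every homogeneous prime $\mathfrak{p} \neq \mathfrak{m}$: if $I \not\subseteq \mathfrak{p}$ then $I R_\mathfrak{p} = R_\mathfrak{p}$ and both $I^m$ and $I^{(m)}$ localize to $R_\mathfrak{p}$; if $I \subseteq \mathfrak{p}$ then $\mathfrak{p}$ contains some $I_j$, and because $I_j$ has height $2$ while $\mathfrak{p} \neq \mathfrak{m}$ we must have $\mathfrak{p} = I_j$. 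Since the points are distinct, $I_k \not\subseteq I_j$ for $k \neq j$, so $(I_k)_{I_j} = R_{I_j}$, whence $I R_{I_j} = I_j R_{I_j}$ and $I^{(m)} R_{I_j} = I_j^m R_{I_j} = (I^m) R_{I_j}$; in either case $M_\mathfrak{p} = 0$.

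The main obstacle is this last inclusion — more precisely, the passage from the pointwise statement that $I^m$ and $I^{(m)}$ agree at every homogeneous prime other than $\mathfrak{m}$ to a single exponent $k$ with $\mathfrak{m}^k I^{(m)} \subseteq I^m$; this is exactly where Noetherianity (finite generation of $M$) together with the fact that a finitely generated module supported in $\{\mathfrak{m}\}$ has finite length is used. Equivalently, one could rephrase the whole argument sheaf-theoretically: $\widetilde{I^m} = (\widetilde{I})^m$ is the ideal sheaf of $Z_m = m(p_1+\cdots+p_r)$, because sheafification is local and away from the finitely many points $I$ and $I_j$ agree, and then $(I^m)^{\text{sat}} = \Gamma_*(\mathbb{P}^2, \widetilde{I^m}) = \Gamma_*(\mathbb{P}^2, \mathcal{I}_{Z_m}) = I^{(m)}$. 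I prefer the affine-local version above since it keeps everything inside $R$ and avoids invoking $\Gamma_*$, but either route works.
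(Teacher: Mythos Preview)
Your argument is correct. Both inclusions are handled cleanly: the containment $I^m\subseteq I^{(m)}$ together with the observation that $I^{(m)}$ has no $\mathfrak{m}$-primary component gives one direction, and the localization check showing that $I^{(m)}/I^m$ is supported only at $\mathfrak{m}$ gives the other. One small point worth making explicit: you check $M_{\mathfrak{p}}=0$ only for \emph{homogeneous} primes $\mathfrak{p}\neq\mathfrak{m}$, but this suffices because $M$ is graded and finitely generated, so its associated (in particular its minimal) primes are homogeneous; hence $\sqrt{\mathrm{Ann}(M)}=\mathfrak{m}$ and $M$ has finite length. Otherwise the proof is complete as written, and your sheaf-theoretic alternative via $\Gamma_*$ is equally valid.

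As for comparison with the paper: there is nothing to compare. The paper does not prove this lemma at all; it simply records the statement and points to Lemma~1.3 of \cite{AV03} for a proof. Your write-up therefore supplies what the paper omits, and does so by the standard route one would expect.
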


In this paper we will be interested in studying the ideals of uniform fat point subschemes $Z = mp_1 + \cdots + mp_r$ such that the points $p_1, \dots, p_r$ are in \textit{general position}.

\begin{defn}
A collection of points in $\mathbb{P}^2$ is in \textbf{general position} if, for each $d \in \mathbb{N}$, no subset of cardinality ${d+2 \choose 2}$ lies on any curve of degree $d$.
\end{defn}

\subsection{Generic Initial Ideals}

An element $g = (g_{ij}) \in \text{GL}_n(K)$ acts on $R = K[x_1, \dots, x_n]$ and sends any homogeneous element $f(x_1, \dots, x_n)$ to the homogeneous element 
$$f(g(x_1), \dots, g(x_n))$$ 
where $g(x_i) = \sum_{j=1}^n g_{ij}x_j$.  If $g(I)=I$ for every upper triangular matrix $g$ then we say that $I$ is \textit{Borel-fixed}.  Borel-fixed ideals are \textit{strongly stable} when $K$ is of characteristic 0; that is, for every monomial $m$ in the ideal such that $x_i$ divides $m$, the monomials $\frac{x_jm}{x_i}$ are also in the ideal for all $j<i$.  This property makes such ideals particularly nice to work with.

To any homogeneous ideal $I$ of $R$ we can associate a Borel-fixed monomial ideal $\text{gin}_{>}(I)$ which can be thought of as a coordinate-independent version of the initial ideal.  Its existence is guaranteed by Galligo's theorem (also see \cite[Theorem 1.27]{Green98}).

\begin{thm}[{\cite{Galligo74} and \cite{BS87b}}]
\label{thm:galligo}
For any multiplicative monomial order $>$ on $R$ and any homogeneous ideal $I\subset R$, there exists a Zariski open subset $U \subset \text{GL}_n$ such that $\text{In}_{>}(g(I))$ is constant and Borel-fixed for all $g \in U$.  
\end{thm}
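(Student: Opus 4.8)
The plan is to analyse the map $g \mapsto \text{In}_{>}(g(I))$ one graded piece at a time, read off the generic value from Plücker coordinates, and then establish Borel-fixedness separately. Recall first that for any homogeneous ideal $L$ one has $(\text{In}_{>}(L))_d = \text{In}_{>}(L_d)$ for every $d$, so it is enough to understand each subspace $\text{In}_{>}(g(I_d)) \subseteq R_d$. Fix $d$, set $k = \dim_K I_d$, and list the degree-$d$ monomials as $n_1 > n_2 > \cdots > n_N$. A $k$-dimensional subspace $V \subseteq R_d$ is a point of $\text{Gr}(k, R_d)$ with Plücker coordinates $p_S(V)$ indexed by $k$-subsets $S$ of the $n_i$, and $\text{In}_{>}(V)$ is precisely the $>$-largest such $S$ (comparing decreasing-sorted $k$-tuples lexicographically) with $p_S(V) \neq 0$; this $S$ is the pivot set of Gaussian elimination performed with columns in decreasing $>$-order.

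Since $g \in \text{GL}_n$ acts linearly on $R_d$, each $p_S(g(I_d))$ is a polynomial in the entries of $g$. Let $S_d$ be the $>$-largest $k$-set for which $p_{S_d}(\cdot)$ is not identically zero, and let $W_d$ be the monomial subspace spanned by $S_d$; then by the characterization above, $\text{In}_{>}(g(I_d))$ is constantly $W_d$ on the nonempty Zariski-open set $U_d := \{g : p_{S_d}(g(I_d)) \neq 0\}$. To pass to all degrees at once, one uses that the minimal generators of any monomial ideal with the Hilbert function of $I$ have degrees bounded by some $D$ depending only on that Hilbert function (a Gotzmann/regularity bound); hence $\text{In}_{>}(g(I)) = J$ already on the nonempty Zariski-open set $U := \bigcap_{d \le D} U_d$, where $J$ is the monomial ideal with $J_d = W_d$ for all $d$, and $J$ is a genuine ideal because it equals $\text{In}_{>}(g(I))$ for any single $g \in U$. (Equivalently: $\text{In}_{>}(g(I))$ takes finitely many values, its fibres are constructible, $\text{GL}_n$ is irreducible, so one fibre is dense and contains a Zariski-open $U$.)

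It remains to show $J$ is Borel-fixed. In characteristic $0$ the Borel subgroup $B$ of upper-triangular matrices is generated by the diagonal torus, which fixes every monomial ideal, together with the unipotent root subgroups, so it suffices to prove $b(J) = J$ for $b \in B$. Given $b$, the set $U \cap bU$ is dense and open, so pick $g \in U$ with $g' := b^{-1}g \in U$; then $g(I) = b(g'(I))$, and $g, g' \in U$ give $\text{In}_{>}(b(g'(I))) = \text{In}_{>}(g'(I)) = J$, so $V' := g'(I)$ is an honest ideal with $\text{In}_{>}(V') = \text{In}_{>}(bV') = J$. Now degenerate $V'$ to $\text{In}_{>}(V') = J$ using a one-parameter subgroup $\lambda(t) = \text{diag}(t^{w_1}, \dots, t^{w_n})$ whose weights are monotone enough to realize $>$ in the finitely many relevant degrees; since $b$ is upper-triangular the conjugate $\lambda(t)^{-1} b \lambda(t)$ converges to the diagonal part $\text{diag}(b)$, whence $b(J) = \lim_{t\to\infty} b\lambda(t)(V') = \lim_{t\to\infty} \lambda(t)\bigl(\lambda(t)^{-1}b\lambda(t)\bigr)(V') = \text{In}_{>}\bigl(\text{diag}(b)(V')\bigr) = \text{In}_{>}(V') = J$. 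Alternatively, one may bypass the degeneration and prove directly the combinatorial lemma that a $>$-maximal (i.e.\ generic) initial monomial ideal is automatically strongly stable, which is Borel-fixedness in characteristic $0$.

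I expect the last step to be the main obstacle. The density argument by itself only delivers $\text{gin}(I) = \text{gin}(b(I))$, i.e.\ that $\text{gin}$ is a $B$-invariant \emph{operation}; upgrading this to the assertion that $\text{gin}(I)$ is itself a $B$-\emph{fixed} ideal is exactly where the structure of $B$ and the compatibility of the term order with a monotone weighting (and characteristic $0$) must genuinely enter, rather than just formal genericity. A lesser technical point, handled above by the bound $D$, is that the generic constancy of $\text{In}_{>}(g(I))$ is a priori only a statement about a countable intersection of open sets; locating a single Zariski-open $U$ on which it holds is what forces the appeal to a uniform degree bound depending only on the Hilbert function of $I$.
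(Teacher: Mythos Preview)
The paper does not prove this theorem at all; it is quoted from \cite{Galligo74} and \cite{BS87b} as background, with a pointer to \cite[Theorem~1.27]{Green98}. So there is no ``paper's proof'' to compare against, and your attempt goes well beyond what the paper itself does.

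Your argument for the existence of the open set $U$ is the standard one and is correct: the degree-by-degree Pl\"ucker analysis identifies the generic initial subspace $W_d$, and either a Gotzmann-type degree bound or the finiteness/constructibility argument collapses the countable intersection to a genuine Zariski-open set. Both justifications you give work.

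The Borel-fixedness step is where you rightly locate the difficulty, but the degeneration argument you sketch is not merely delicate---it is circular as written. You assert
\[
\lim_{t\to\infty}\lambda(t)\bigl(\lambda(t)^{-1}b\lambda(t)\bigr)(V') \;=\; \text{In}_{>}\bigl(\text{diag}(b)\,V'\bigr),
\]
but the left side equals $\lim_{t\to\infty} b\lambda(t)(V') = b\bigl(\lim \lambda(t)V'\bigr) = b(J)$ by continuity of $b$, while the right side equals $\text{In}_{>}(V')=J$. So the displayed equality \emph{is} the statement $b(J)=J$ you are trying to prove; it has not been derived from anything. The heuristic ``$c_t\to c_\infty$ so $\lim\lambda(t)c_tV'=\text{In}_{>}(c_\infty V')$'' fails in general for simultaneous limits, and in this instance succeeds precisely when the conclusion already holds.

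Your proposed alternative---prove directly that the $>$-maximal initial subspace in each degree is strongly stable---is the correct route and is how the standard references argue. Concretely: if $m\in S=\text{gin}(I)_d$ with $x_i\mid m$ and $m'=x_jm/x_i\notin S$ for some $j<i$, one composes a generic $g$ with the transvection $x_i\mapsto x_i+tx_j$ and shows (via an exterior-algebra expansion) that the Pl\"ucker coordinate $p_{(S\setminus\{m\})\cup\{m'\}}$ is not identically zero in $(g,t)$, contradicting the $>$-maximality of $S$. Carrying out that multilinear step is exactly the content you would need to supply.
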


\begin{defn}
The \textbf{generic initial ideal of $I$}, denoted $\text{gin}_{>}(I)$, is defined to be $\text{In}_{>}(g(I))$ where $g \in U$ is as in Galligo's theorem.
\end{defn}

The \textit{reverse lexicographic order} $>$ is a total ordering on the monomials of $R$ defined by: 
\begin{enumerate}
\item if $|I| =|J|$ then $x^I > x^J$ if there is a $k$ such that $i_m = j_m$ for all $m>k$ and $i_k < j_k$; and
\item if $|I| > |J|$ then $x^I >x^J$.
\end{enumerate}
For example, $x_1^2 >x_1x_2 > x_2^2>x_1x_3>x_2x_3>x_3^2$.   From this point on, $\text{gin}(I) = \text{gin}_{>}(I)$ will denote the generic initial ideal with respect to the reverse lexicographic order.

Recall that the Hilbert function $H_I(t)$ of $I$ is defined by $H_I(t) = \text{dim}(I_t)$.  The following theorem is a consequence of the fact that Hilbert functions are invariant under making changes of coordinates and taking initial ideals; we will use it frequently and freely throughout this paper.

\begin{thm}
\label{thm:commonproperties}
For any homogeneous ideal $I$ in $R$, the Hilbert functions of $I$ and $\text{gin}(I)$ are equal.
\end{thm}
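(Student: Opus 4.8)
The plan is to factor the statement into two independent invariance properties of the Hilbert function and then combine them via the definition of $\text{gin}(I)$. First I would record that the Hilbert function is unchanged under a linear change of coordinates: any $g \in \text{GL}_n(K)$ acts on $R$ as a graded $K$-algebra automorphism, so each graded piece $R_t \to R_t$ is a $K$-linear isomorphism, and by definition $g$ carries $I$ onto $g(I)$, hence restricts to a bijection $I_t \to g(I)_t$. Therefore $\dim_K I_t = \dim_K g(I)_t$ for every $t$, i.e.\ $H_I = H_{g(I)}$.

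Second, and this is the substantive point, I would show that for any fixed multiplicative monomial order $>$ one has $H_I = H_{\text{In}_>(I)}$, the point being that the monomials of $R_t$ \emph{not} lying in $\text{In}_>(I)$ descend to a $K$-basis of $(R/I)_t$. For linear independence: if some nontrivial $K$-combination of such monomials lay in $I$, its $>$-leading monomial would belong to $\text{In}_>(I)$, a contradiction. For spanning: given $f \in R_t$, if its leading monomial lies in $\text{In}_>(I)$, subtract a suitable scalar multiple of an element of $I$ with that same leading monomial to strictly lower the leading monomial of $f$; since $>$ well-orders the finite set of degree-$t$ monomials, after finitely many steps one obtains a representative of $f \bmod I$ supported entirely on monomials outside $\text{In}_>(I)$. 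Hence $\dim_K (R/I)_t = \#\{ x^a \in R_t : x^a \notin \text{In}_>(I)\} = \dim_K (R/\text{In}_>(I))_t$, and subtracting from $\dim_K R_t$ gives $\dim_K I_t = \dim_K \text{In}_>(I)_t$ for all $t$.

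Finally, I would invoke Theorem \ref{thm:galligo}: there is a nonempty Zariski-open $U \subseteq \text{GL}_n$ on which $\text{In}_>(g(I)) = \text{gin}_>(I)$ for all $g \in U$. Fixing any such $g$ and chaining the two invariances,
\[
H_{\text{gin}(I)} = H_{\text{In}_>(g(I))} = H_{g(I)} = H_I.
\]
The only step requiring genuine care is the second one — that passing to the initial ideal preserves Hilbert functions — but this is precisely the classical standard-monomial (Macaulay) basis fact, and the argument sketched above is complete once one observes that the reduction process terminates, which follows immediately from the well-orderedness of $>$ on each finite graded component. No ideas beyond these standard facts are needed.
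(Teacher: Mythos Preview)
Your proposal is correct and follows exactly the route the paper indicates: the paper does not actually give a proof but simply remarks that the result ``is a consequence of the fact that Hilbert functions are invariant under making changes of coordinates and taking initial ideals,'' which is precisely the two-step factorization you carry out. Your argument merely fills in the standard details (the graded-automorphism observation and the Macaulay standard-monomial basis argument) that the paper leaves implicit.
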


In this paper we will be studying the set of reverse lexicographic generic initial ideals of symbolic powers  of a fixed ideal $I$, $\{\text{gin}(I^{(m)})\}_m$.  One reason for our interest in these ideals is the following proposition which tells us that we can get information about the ideals  $\text{gin}(I^{m})$ from the ideals $ \text{gin}(I^{(m)})$.

\begin{prop}[{Proposition 2.21 of \cite{Green98}}]
\label{prop:saturations}
Fix the reverse lexicographic order on $K[x_1, \dots, x_n]$ with $x_1> x_2 > \cdots > x_n$ and let $\mathfrak{m} = (x_1, \dots, x_n)$.  Then, if $I^{\text{sat}} = \bigcup_{k \geq 0}(I:\mathfrak{m}^k)$ denotes the saturation of $I$,
$$\text{gin}(I^{\text{sat}}) = \bigcup_{k \geq 0}(\text{gin}(I):\mathfrak{m}^k) = (\text{gin}(I))^{\text{sat}}.$$
In particular, when $I$ is the ideal of distinct points in $\mathbb{P}^2$,
$$\text{gin}(I^{(m)}) =  \bigcup_{k \geq 0}(\text{gin}(I^m):\mathfrak{m}^k) = (\text{gin}(I^m))^{\text{sat}}.$$
for all $m \geq 1$ by Lemma \ref{lem:symbpower}.
\end{prop}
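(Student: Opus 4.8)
The plan is to reduce the whole statement to the single identity $\text{gin}(I^{\text{sat}}) = (\text{gin}(I))^{\text{sat}}$: the middle expression $\bigcup_{k\ge 0}(\text{gin}(I):\mathfrak{m}^k)$ equals $(\text{gin}(I))^{\text{sat}}$ by the definition of saturation given above, and the ``in particular'' clause for points is the identity applied to the ordinary power $I^m$ in place of $I$, combined with Lemma~\ref{lem:symbpower}, which gives $(I^m)^{\text{sat}} = I^{(m)}$. To prove the identity I would first fix a single $g \in \text{GL}_n$ lying in the intersection of three dense Zariski-open subsets: one on which $\text{In}_>(gI) = \text{gin}(I)$ (Galligo, Theorem~\ref{thm:galligo}), one on which $\text{In}_>(g(I^{\text{sat}})) = \text{gin}(I^{\text{sat}})$ (Galligo applied to $I^{\text{sat}}$), and one on which $x_n$ is a nonzerodivisor on $R/(gI)^{\text{sat}}$. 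The third set is dense and open because, away from the trivial case $I^{\text{sat}}=R$, the ring $R/J^{\text{sat}}$ has no nonzero element killed by a power of $\mathfrak{m}$ and hence positive depth, so by prime avoidance a generic linear form avoids its finitely many associated primes (all homogeneous and distinct from $\mathfrak{m}$); pulling this condition back along $g \mapsto g^{-1}(x_n)$ gives a dense open set of admissible $g$. Since $\mathfrak{m}$ is fixed by $\text{GL}_n$, saturation commutes with the group action, so $g(I^{\text{sat}}) = (gI)^{\text{sat}}$, and writing $L=gI$ the goal becomes $\text{In}_>(L^{\text{sat}}) = \text{In}_>(L)^{\text{sat}}$.

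The argument then has three moves. First, invoke the defining feature of the reverse lexicographic order: for every homogeneous ideal $L$ one has $\text{In}_>(L:x_n) = \text{In}_>(L):x_n$; iterating this and using Noetherianity to stabilize both colon chains gives $\text{In}_>(L:x_n^\infty) = \text{In}_>(L):x_n^\infty$. Second, note that $\text{In}_>(L)=\text{gin}(I)$ is Borel-fixed, hence strongly stable in characteristic $0$; for a strongly stable monomial ideal $B$ the substitution rule shows that $x_n^N f \in B$ forces $x_i^N f \in B$ for every $i$, whence $\mathfrak{m}^{nN}f \subseteq B$ and so $B:x_n^\infty = B:\mathfrak{m}^\infty = B^{\text{sat}}$; applied to $B=\text{In}_>(L)$ this turns the previous line into $\text{In}_>(L:x_n^\infty) = \text{In}_>(L)^{\text{sat}}$. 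Third, because $x_n$ is a nonzerodivisor on $R/L^{\text{sat}}$, that ideal is already $x_n$-saturated, so $L^{\text{sat}} = L:\mathfrak{m}^\infty \subseteq L:x_n^\infty \subseteq L^{\text{sat}}:x_n^\infty = L^{\text{sat}}$, which forces $L:x_n^\infty = L^{\text{sat}}$. Stringing the three moves together yields $\text{In}_>(L^{\text{sat}}) = \text{In}_>(L:x_n^\infty) = \text{In}_>(L)^{\text{sat}}$, i.e. $\text{gin}(I^{\text{sat}}) = \text{gin}(I)^{\text{sat}}$.

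The main obstacle is the reverse-lex identity $\text{In}_>(L:x_n) = \text{In}_>(L):x_n$ used in the first move; this is precisely where the choice of term order is essential. It must be established by exploiting that in the reverse lexicographic order every degree-$d$ monomial divisible by $x_n$ is smaller than every degree-$d$ monomial not divisible by $x_n$, so that reduction modulo $x_n$ and division by $x_n$ interact predictably with passage to initial terms (this is part of the standard theory of reverse lexicographic initial ideals, and is also where one sees why the reverse lex gin detects saturation). Everything else — the Zariski-open genericity conditions, the definition of saturation, and the combinatorics of strongly stable ideals — is routine.
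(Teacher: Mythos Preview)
Your argument is correct and is essentially the standard Bayer--Stillman proof that underlies the result you cite; the three moves (the reverse-lex colon identity $\text{In}_>(L:x_n)=\text{In}_>(L):x_n$, the strongly-stable reduction of $x_n$-saturation to $\mathfrak{m}$-saturation, and the generic nonzerodivisor step giving $L:x_n^\infty=L^{\text{sat}}$) are exactly the ingredients used in Green's Proposition~2.21. Note, however, that the paper does not supply its own proof of this proposition: it is quoted directly from \cite{Green98} (and only the ``in particular'' clause is the paper's addition, which is immediate from Lemma~\ref{lem:symbpower}), so there is no independent argument in the paper to compare your proposal against.
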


The following result due to Bayer and Stillman (\cite{BS87}).

\begin{prop}[{Theorem 2.21 of \cite{Green98}}]
\label{prop:geninxy}
Fix the reverse lexicographic order on $K[x_1, \dots, x_n]$ with $x_1> x_2 > \cdots > x_n$. An ideal $I$ of $R$ is saturated if and only if no minimal generator of $\text{gin}(I)$ involves the variable $x_n$.  In particular, when $I \subset K[x,y,z]$ is the (saturated) ideal of a set of distinct points of $\mathbb{P}^2$, no minimal generator of $\text{gin}(I^{(m)})$ involves the variable $z$.
\end{prop}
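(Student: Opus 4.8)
The plan is to first prove the statement for strongly stable monomial ideals and then transfer it to an arbitrary homogeneous $I$ via $\text{gin}$, using that the reverse lexicographic $\text{gin}$ commutes with saturation (Proposition \ref{prop:saturations}) and preserves Hilbert functions (Theorem \ref{thm:commonproperties}).

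The first step is the monomial version: a strongly stable monomial ideal $J \subseteq K[x_1,\dots,x_n]$ is saturated if and only if $x_n$ divides no minimal generator of $J$. Since $J$ is a monomial ideal, so is each colon ideal $J : x_i$, and $J : \mathfrak{m} = \bigcap_{i=1}^{n}(J : x_i) \subseteq J : x_n$. For the ``if'' direction, take a monomial $f$ with $x_n f \in J$ and a minimal generator $g$ of $J$ dividing $x_n f$; because $g$ involves no $x_n$, it already divides $f$, so $f \in J$. Hence $J : x_n = J$, and the containment above then gives $J : \mathfrak{m} = J$, so $J$ is saturated. For the ``only if'' direction, take a minimal generator $x^a$ with $\deg_{x_n} x^a \geq 1$ and look at $x^a / x_n$: it is not in $J$, for otherwise $x^a$ would be divisible by a minimal generator of smaller degree, contradicting minimality; yet $x_n \cdot (x^a / x_n) = x^a \in J$, and applying strong stability to one copy of $x_n$ in $x^a$ yields $x_i \cdot (x^a/x_n) \in J$ for every $i < n$. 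Thus $x^a/x_n \in (J : \mathfrak{m}) \setminus J$ and $J$ is not saturated.

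The second step transfers this to $I$. As $K$ has characteristic $0$, $\text{gin}(I)$ is strongly stable, so the monomial version applies with $J = \text{gin}(I)$. By Proposition \ref{prop:saturations}, $\text{gin}(I^{\text{sat}}) = (\text{gin}(I))^{\text{sat}}$; by Theorem \ref{thm:commonproperties}, passing to the generic initial ideal leaves the Hilbert function unchanged. Since $I \subseteq I^{\text{sat}}$ and $\text{gin}(I) \subseteq (\text{gin}(I))^{\text{sat}}$ are containments of homogeneous ideals, each is an equality precisely when the corresponding Hilbert functions coincide in all degrees; comparing the two, $I = I^{\text{sat}}$ if and only if $\text{gin}(I) = (\text{gin}(I))^{\text{sat}}$, that is, $I$ is saturated if and only if $\text{gin}(I)$ is. Combined with the monomial version, this is the asserted equivalence. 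Finally, for the ``in particular'' statement, Lemma \ref{lem:symbpower} gives $I^{(m)} = (I^m)^{\text{sat}}$, a saturated ideal, so by what has just been shown no minimal generator of $\text{gin}(I^{(m)})$ involves $z$.

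I expect the main obstacle to be the ``only if'' direction of the monomial lemma — specifically, checking that strong stability genuinely allows one to move an exponent off $x_n$ and so produce an element of $J : \mathfrak{m}$ lying outside $J$ — together with the Hilbert-function bookkeeping used to pass between $I = I^{\text{sat}}$ and $\text{gin}(I) = (\text{gin}(I))^{\text{sat}}$. The remaining steps are routine.
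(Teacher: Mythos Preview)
Your argument is correct. The monomial step for strongly stable ideals is handled cleanly in both directions, and the transfer to arbitrary homogeneous $I$ via the identities $\text{gin}(I^{\text{sat}}) = (\text{gin}(I))^{\text{sat}}$ and $H_I = H_{\text{gin}(I)}$ is sound: equality of Hilbert functions for a containment of homogeneous ideals forces equality of the ideals, so $I = I^{\text{sat}}$ is equivalent to $\text{gin}(I) = (\text{gin}(I))^{\text{sat}}$, and the monomial lemma finishes the equivalence. The ``in particular'' clause follows immediately from Lemma~\ref{lem:symbpower}.

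As for comparison, the paper does not actually prove this proposition: it is quoted as a result of Bayer and Stillman and cited as Theorem~2.21 of \cite{Green98}, with no argument given. So you have supplied a self-contained proof where the paper simply invokes the literature. Your route is essentially the standard one (and is close to what one finds in Green's notes): reduce to the Borel-fixed monomial case and use that revlex $\text{gin}$ commutes with saturation. There is nothing to correct.
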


\begin{cor}
\label{cor:formofgin}
 Suppose that $I \subset K[x,y,z]$ is the ideal of a set of distinct points of $\mathbb{P}^2$.  Then the minimal generators of $\text{gin}(I^{(m)})$ under the reverse lexicographic order are of the form
$$\{ x^{\alpha(m)}, x^{\alpha(m)-1}y^{\lambda_{\alpha(m)-1}(m)}, \dots, xy^{\lambda_1(m)}, y^{\lambda_0(m)} \}$$
where $\lambda_0(m) > \lambda_1(m) > \cdots > \lambda_{\alpha(m)-1}(m) \geq 1$.  
\end{cor}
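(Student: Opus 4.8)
The plan is to combine Proposition \ref{prop:geninxy} with the strong stability of generic initial ideals in characteristic $0$. By Proposition \ref{prop:geninxy}, since $I^{(m)}$ is a saturated ideal (it is the ideal of a fat point subscheme, hence saturated), no minimal generator of $\text{gin}(I^{(m)})$ involves the variable $z$. Thus every minimal generator is a monomial in $x$ and $y$ alone, so $\text{gin}(I^{(m)})$ has minimal generators of the form $x^a y^b$ for various pairs $(a,b)$ with $a+b \geq 1$.

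Next I would use the fact that $\text{gin}(I^{(m)})$ is Borel-fixed, and hence strongly stable since $\text{char}(K) = 0$. Strong stability says that if $x^a y^b$ lies in the ideal with $b \geq 1$, then $x^{a+1}y^{b-1}$ also lies in the ideal. Applying this repeatedly to any generator $x^a y^b$, we get that $x^{a+b} \in \text{gin}(I^{(m)})$. Let $\alpha(m)$ denote the least power of $x$ occurring this way, i.e. the smallest $a$ such that $x^a \in \text{gin}(I^{(m)})$; equivalently, since Hilbert functions are preserved (Theorem \ref{thm:commonproperties}), $\alpha(m) = \alpha(I^{(m)})$ is the initial degree. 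Then for each exponent $i$ with $0 \leq i \leq \alpha(m)$, I would define $\lambda_i(m)$ to be the smallest nonnegative integer $j$ such that $x^i y^j \in \text{gin}(I^{(m)})$ (this set is nonempty because $x^{i} y^{\alpha(m)-i}$ is in the ideal by strong stability applied to $x^{\alpha(m)}$; for $i = \alpha(m)$ we have $\lambda_{\alpha(m)}(m) = 0$). Strong stability forces $\lambda_i(m) \leq \lambda_{i-1}(m)$ for all $i$, and I would argue the inequality is strict for $1 \leq i \leq \alpha(m)$: if $\lambda_i(m) = \lambda_{i-1}(m) = j$, then $x^{i-1}y^{j} \in \text{gin}(I^{(m)})$ forces $x^i y^{j} = x \cdot x^{i-1} y^{j}$ to be a non-minimal generator, contradicting its minimality among generators with $x$-exponent exactly $i$ — more precisely, $x^i y^{\lambda_i(m)}$ would be divisible by $x^{i-1} y^{\lambda_{i-1}(m)}$. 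Hence $\lambda_0(m) > \lambda_1(m) > \cdots > \lambda_{\alpha(m)-1}(m) \geq 1$, and the displayed list consists exactly of the minimal generators: every monomial $x^i y^j$ in the ideal is divisible by $x^i y^{\lambda_i(m)}$, so the monomials $x^i y^{\lambda_i(m)}$ generate, and each is minimal because no other listed monomial divides it (they have strictly smaller $y$-exponent only when they have strictly larger $x$-exponent, and vice versa).

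The only mild subtlety — the step I expect to require the most care — is checking that the listed monomials are precisely the \emph{minimal} generators with no redundancy, i.e. that $x^i y^{\lambda_i(m)}$ is genuinely a minimal generator for each $i$ in the range and that there are no generators with $x$-exponent exceeding $\alpha(m)$. The first follows because $x^i y^{\lambda_i(m)}$ cannot be divisible by $x^{i'} y^{\lambda_{i'}(m)}$ for $i' \neq i$: if $i' < i$ then $\lambda_{i'}(m) > \lambda_i(m)$ so divisibility fails in the $y$-exponent, and if $i' > i$ divisibility fails in the $x$-exponent. The second holds since $x^{\alpha(m)} \in \text{gin}(I^{(m)})$ already, so any monomial $x^a y^b$ with $a > \alpha(m)$ is divisible by $x^{\alpha(m)}$ and hence non-minimal. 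This completes the argument.
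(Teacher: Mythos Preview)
Your argument has a genuine gap at the step where you claim that $x^{i}y^{\alpha(m)-i}\in\text{gin}(I^{(m)})$ ``by strong stability applied to $x^{\alpha(m)}$.'' Strong stability goes in the \emph{other} direction: in characteristic $0$, Borel-fixedness says that if a monomial $m$ lies in the ideal and $x_i\mid m$, then $\tfrac{x_j}{x_i}m$ lies in the ideal for every $j<i$. With $x>y>z$, this lets you replace a factor of $y$ by a factor of $x$, not the reverse. Starting from $x^{\alpha(m)}$ you can deduce nothing new, since there is no $y$ to trade away. Consequently you have not shown that the set $\{j:x^{i}y^{j}\in\text{gin}(I^{(m)})\}$ is nonempty for $i<\alpha(m)$; in particular you have not shown that any pure power of $y$ belongs to $\text{gin}(I^{(m)})$, so $\lambda_0(m)$ is not yet known to be finite.

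This is exactly the point the paper does not leave to strong stability: it invokes the Herzog--Srinivasan lemma relating the Krull dimension of a Borel-fixed monomial ideal to which pure variable powers it contains, to conclude that $\text{gin}(I^{(m)})$ contains some $y^{N}$. An equivalent elementary argument you could insert instead: $I^{(m)}$ defines a zero-dimensional subscheme of $\mathbb{P}^2$, so $\dim K[x,y,z]/\text{gin}(I^{(m)})=\dim K[x,y,z]/I^{(m)}=1$; if $\text{gin}(I^{(m)})$ contained no pure power of $y$, then (since no generator involves $z$) every generator would be divisible by $x$, giving $\text{gin}(I^{(m)})\subseteq(x)$ and hence $\dim K[x,y,z]/\text{gin}(I^{(m)})\geq 2$, a contradiction. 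Once some $y^{N}$ is known to lie in the ideal, each $\lambda_i(m)$ is finite (since $x^{i}y^{N}\in\text{gin}(I^{(m)})$), and the rest of your argument goes through; note also that strong stability already gives the \emph{strict} inequality directly, since $x^{i-1}y^{\lambda_{i-1}(m)}\in\text{gin}(I^{(m)})$ with $\lambda_{i-1}(m)\geq 1$ forces $x^{i}y^{\lambda_{i-1}(m)-1}\in\text{gin}(I^{(m)})$, whence $\lambda_i(m)\leq\lambda_{i-1}(m)-1$.
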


\begin{proof}
By a result of Herzog and Srinivasan relating the dimension of a Borel-fixed monomial ideal $J$ to the variable powers that it contains, $\text{gin}(I^{(m)})$ contains a power of $y$ (see Lemma 3.1 of \cite{HS98}).   Now the result is immediate from Proposition \ref{prop:geninxy} and the fact that $\text{gin}(I^{(m)})$ is a Borel-fixed ideal.
\end{proof}


\subsection{Graded Systems}

In this subsection we introduce some tools for studying certain collections of monomial ideals.

\begin{defn}[\cite{ELS01}]
A \textbf{graded system of ideals} is a collection of ideals $J_{\bullet} =\{J_i\}_{i=1}^{\infty}$ such that 
$$J_i \cdot J_j \subseteq J_{i+j} \hspace{0.3in} \text{ for all } i, j \geq 1.$$
\end{defn}

\begin{defn}
The \textbf{generic initial system} of a homogeneous ideal $I$ is the collection of ideals $J_{\bullet}$ such that $J_i = \text{gin}(I^i)$.  The \textbf{symbolic generic initial system} of a homogeneous ideal $I$ is the collection of ideals $J_{\bullet}$ such that $J_i = \text{gin}(I^{(i)})$.
\end{defn}

\begin{lem}
The symbolic generic initial system is a graded system of ideals.
\end{lem}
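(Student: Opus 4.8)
The plan is to verify the defining inclusion $J_i \cdot J_j \subseteq J_{i+j}$ for $J_i = \mathrm{gin}(I^{(i)})$ directly, reducing it to the analogous statement for ordinary powers combined with the behaviour of generic initial ideals under products. First I would recall the two basic facts about symbolic powers of the ideal $I$ of distinct points in $\mathbb{P}^2$: by definition of symbolic power (equivalently by the primary decomposition $I^{(m)} = I_1^m \cap \cdots \cap I_r^m$), one has the containment $I^{(i)} \cdot I^{(j)} \subseteq I^{(i+j)}$, since a form vanishing to order $i$ at each $p_k$ times a form vanishing to order $j$ at each $p_k$ vanishes to order at least $i+j$ at each $p_k$. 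So the symbolic powers $\{I^{(m)}\}_m$ themselves form a graded system; the content of the lemma is that passing to reverse lexicographic generic initial ideals preserves this property.

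Next I would handle the interaction with $\mathrm{gin}$. The key input is that for a generic $g \in \mathrm{GL}_3$, $\mathrm{gin}(I^{(i)}) = \mathrm{In}_>(g(I^{(i)}))$, and similarly for $j$ and $i+j$; moreover one can choose a single $g$ in the intersection of the three relevant Zariski-open sets $U$ from Galligo's theorem (Theorem \ref{thm:galligo}), so that all three generic initial ideals are simultaneously computed by the same coordinate change. Since $g$ is an algebra automorphism of $R$, $g(I^{(i)}) \cdot g(I^{(j)}) = g(I^{(i)} \cdot I^{(j)}) \subseteq g(I^{(i+j)})$. Then I would use the standard fact that initial ideals satisfy $\mathrm{In}_>(A) \cdot \mathrm{In}_>(B) \subseteq \mathrm{In}_>(A \cdot B)$ for any ideals $A, B$ (the leading term of a product of two elements is the product of their leading terms, so every generator of $\mathrm{In}_>(A)\cdot\mathrm{In}_>(B)$ is a leading term of an element of $AB$), together with the monotonicity $\mathrm{In}_>(A) \subseteq \mathrm{In}_>(B)$ whenever $A \subseteq B$. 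Chaining these gives
$$\mathrm{gin}(I^{(i)}) \cdot \mathrm{gin}(I^{(j)}) = \mathrm{In}_>(g(I^{(i)})) \cdot \mathrm{In}_>(g(I^{(j)})) \subseteq \mathrm{In}_>\!\big(g(I^{(i)}) \cdot g(I^{(j)})\big) \subseteq \mathrm{In}_>\!\big(g(I^{(i+j)})\big) = \mathrm{gin}(I^{(i+j)}),$$
which is exactly $J_i \cdot J_j \subseteq J_{i+j}$.

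The only genuinely delicate point — and the step I would be most careful about — is the claim that one generic $g$ works for all three ideals at once; this is routine because a finite intersection of dense Zariski-open subsets of the irreducible variety $\mathrm{GL}_3$ is again dense and open, but it should be stated explicitly since $\mathrm{gin}$ is a priori defined one ideal at a time. Everything else is formal, so I would keep the write-up to a few lines: establish $I^{(i)}I^{(j)} \subseteq I^{(i+j)}$, fix a common generic $g$, and apply $\mathrm{In}_>(A)\mathrm{In}_>(B) \subseteq \mathrm{In}_>(AB)$ and monotonicity of $\mathrm{In}_>$.
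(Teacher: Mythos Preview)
Your argument is correct and is essentially the paper's own proof: fix a single $g$ in the intersection of the three Zariski-open sets from Galligo's theorem, use $I^{(i)}I^{(j)}\subseteq I^{(i+j)}$, and then apply the multiplicativity of leading terms. The paper carries this out element-by-element on monomial generators rather than invoking $\mathrm{In}_>(A)\,\mathrm{In}_>(B)\subseteq\mathrm{In}_>(AB)$ as a lemma, but the content is identical.
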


\begin{proof}
By definition, $\text{gin}(I^{(i)})$ is a monomial ideal. We need to show that for all $i,j \geq 1$, $\textnormal{gin}(I^{(i)})\cdot \textnormal{gin}(I^{(j)}) \subseteq \textnormal{gin}(I^{(i+j)})$.  For any $l \geq 1$, let $U_l$ be the Zariski open subset of $GL_n$ such that $\text{gin}(I^{(l)})= \text{In}(g \cdot (I^{(l)}))$ for all $g$ in $U_l$.  Since $U_i$, $U_j$, and $U_{i+j}$ are Zariski open they have a nonempty intersection; fix some $g \in U_i \cap U_j \cap U_{i+j}$.  
Given monomials $f' \in \textnormal{gin}(I^{(i)}) = \textnormal{In}(g(I^{(i)}))$ and $h' \in \textnormal{gin}(I^{(j)}) = \textnormal{In}(g(I^{(j)}))$, suppose that $f'=\textnormal{In}(g(f))$ and $h' = \textnormal{In}(g(h))$ for $f \in I^{(i)}$ and $h \in I^{(j)}$.  Now 
 $$f'\cdot h' = \textnormal{In}(g(f)) \textnormal{In}(g(h)) = \textnormal{In}(g(f) \cdot g(h)) = \textnormal{In}(g(f \cdot h)) \in \text{In}(g(I^{(i+j)}))$$
since $f \cdot h \in I^{(i+j)}$.\footnote{This holds since the set of symbolic powers of a fixed ideal is itself a graded system: $I^{(i)} \cdot I^{(j)} \subseteq I^{(i+j)}$.}  Thus $f'\cdot h' \in \text{gin}(I^{(i+j)})$ as desired.
\end{proof}

The same proof with $I^{(i)}$ replaced by $I^i$ shows that the generic initial system is also a graded system of ideals.  


\begin{defn} [\cite{ELS03}]
\label{defn:algebraicvolume}
Let $\mathrm{a}_{\bullet}$ be a graded system of zero-dimensional ideals in $R=K[x_1, \dots, x_n]$.  The \textbf{volume} of $\mathrm{a}_{\bullet}$ is 
$$\mathrm{vol}(\mathrm{a}_{\bullet}) := \limsup_{m \rightarrow \infty} \frac{n! \cdot \mathrm{length}(R/\mathrm{a}_m)}{m^n}.$$
\end{defn}

Let $J$ be a monomial ideal of $R$.  We may associate to $J$ a subset $\Lambda$ of $\mathbb{N}^n$ consisting of the points $\lambda$ such that $x^{\lambda} \in J$.  The \textit{Newton polytope} $P_J$ of $J$ is the convex hull of $\Lambda$ regarded as a subset of $\mathbb{R}^n$.  Scaling the polytope $P_J$ by a factor of $r$ gives another polytope which we will denote $rP_J$.

If $\mathrm{a}_{\bullet}$ is a graded system of monomial ideals in $R$, the polytopes of $\{ \frac{1}{q} P_{\mathrm{a}_q} \}_q$ are nested: $\frac{1}{c}P_{\mathrm{a}_c} \subset \frac{1}{c+1}P_{\mathrm{a}_{c+1}}$ for all $c \geq 1$.  The \textit{limiting shape $P$ of $\mathrm{a}_{\bullet}$} is the limit of the polytopes in this set:
$$P = \bigcup_{q \in \mathbb{N}^*} \frac{1}{q} P_{\mathrm{a}_q}.$$

Under the additional assumption that the ideals of $\mathrm{a}_{\bullet}$ are zero-dimensional, the closure of each set $\mathbb{R}^n_{\geq 0} \backslash P_{\mathrm{a}_q}$ in $\mathbb{R}^n$ is compact. This closure is denoted by $Q_q$ and we let 
$$Q = \bigcap_{q \in \mathbb{N}^*} \frac{1}{q} Q_q.$$

\begin{prop} [\cite{Mustata02}]
\label{prop:volumesequal}
If $\textrm{a}_{\bullet}$ is a graded system of zero-dimensional monomial ideals in $R=K[x_1, \dots, x_n]$ and $Q$ is as defined above,
$$\mathrm{vol}(\textrm{a}_{\bullet}) = n! \mathrm{ vol}(Q).$$
\end{prop}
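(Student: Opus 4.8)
The plan is to translate the statement into lattice‑point geometry, reducing it to a comparison of volumes, and then to recover the convex‑hull complement $Q$ from the graded‑system condition by a Minkowski‑sum convexification argument. For each $m$ write $\Lambda_m=\{\lambda\in\mathbb{N}^n:x^\lambda\in\mathrm{a}_m\}$; since $\mathrm{a}_m$ is a monomial ideal, $\Lambda_m$ is an up‑set (closed under adding vectors of $\mathbb{N}^n$) and $P_{\mathrm{a}_m}=\mathrm{conv}(\Lambda_m)$, so $P_{\mathrm{a}_m}+\mathbb{R}^n_{\geq 0}=P_{\mathrm{a}_m}$. Because the monomials outside $\mathrm{a}_m$ form a $K$‑basis of $R/\mathrm{a}_m$, one has $\mathrm{length}(R/\mathrm{a}_m)=\#(\mathbb{N}^n\setminus\Lambda_m)=\mathrm{vol}(C_m)$, where $C_m:=\bigcup_{\lambda\in\mathbb{N}^n\setminus\Lambda_m}(\lambda+[0,1)^n)$ is the staircase region. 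Thus $\mathrm{vol}(\mathrm{a}_\bullet)=n!\,\limsup_m\mathrm{vol}(\tfrac1m C_m)$, and the goal is to show $\limsup_m\mathrm{vol}(\tfrac1m C_m)=\mathrm{vol}(Q)$.

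The inequality $\geq$ is easy. I would first check $Q_m\subseteq\overline{C_m}$: if $p\in\mathbb{R}^n_{\geq 0}\setminus P_{\mathrm{a}_m}$ then $\lfloor p\rfloor\notin\Lambda_m$, for otherwise $\lfloor p\rfloor\in\Lambda_m\subseteq P_{\mathrm{a}_m}$ would give $p=\lfloor p\rfloor+(p-\lfloor p\rfloor)\in P_{\mathrm{a}_m}+\mathbb{R}^n_{\geq 0}=P_{\mathrm{a}_m}$, a contradiction; hence $p\in C_m$. Therefore $\mathrm{vol}(Q_m)\leq\mathrm{vol}(C_m)$, so $\mathrm{vol}(\tfrac1m C_m)\geq\mathrm{vol}(\tfrac1m Q_m)\geq\mathrm{vol}(Q)$ for every $m$, using $Q=\bigcap_q\tfrac1q Q_q\subseteq\tfrac1m Q_m$; taking $\limsup$ gives this direction.

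The substantive half is the inequality $\leq$. Here I would fix $q\geq 1$ and, for $m=Nq$, use the graded‑system relation $\mathrm{a}_q\cdots\mathrm{a}_q\subseteq\mathrm{a}_{Nq}$ to get $\Lambda_{Nq}\supseteq\Sigma_N:=\Lambda_q+\cdots+\Lambda_q$ ($N$ copies, Minkowski sum). Since $\Lambda_q$ is an up‑set, so is $\Sigma_N$, and $\mathrm{conv}(\Sigma_N)=N\,\mathrm{conv}(\Lambda_q)=N P_{\mathrm{a}_q}$. The crucial point is that $\Sigma_N$ is not merely an up‑set with this convex hull: by the Shapley--Folkman lemma there is a constant $c_q$ depending on $q$ and $n$ but not on $N$ such that $\Sigma_N$ contains every lattice point of $N P_{\mathrm{a}_q}$ at $\ell^\infty$‑distance at least $c_q$ inside it (each $x\in N P_{\mathrm{a}_q}$ is a sum of $N$ points of $P_{\mathrm{a}_q}$, all but at most $n$ of which may be taken in $\Lambda_q$, and the at most $n$ exceptional summands are correctable within a bounded region). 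Since $\mathbb{R}^n_{\geq 0}\setminus N P_{\mathrm{a}_q}=N(\mathbb{R}^n_{\geq 0}\setminus P_{\mathrm{a}_q})$ has volume $N^n\mathrm{vol}(Q_q)$, counting lattice points yields $\#(\mathbb{N}^n\setminus\Sigma_N)=N^n\mathrm{vol}(Q_q)+O_q(N^{n-1})$, hence $\limsup_N\mathrm{vol}(\tfrac1{Nq}C_{Nq})\leq\mathrm{vol}(Q_q)/q^n=\mathrm{vol}(\tfrac1q Q_q)$. A routine argument for $m$ not divisible by $q$ (writing $m=Nq+s$ and exploiting that $\mathrm{a}_s$ is zero‑dimensional, or the standard Fekete‑type fact that $\mathrm{length}(R/\mathrm{a}_m)/m^n$ actually converges) upgrades this to $\limsup_m\mathrm{vol}(\tfrac1m C_m)\leq\mathrm{vol}(\tfrac1q Q_q)$ for all $q$. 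Finally the $\tfrac1q Q_q$ are compact and decreasing (because $\tfrac1q P_{\mathrm{a}_q}$ increases), so $\inf_q\mathrm{vol}(\tfrac1q Q_q)=\mathrm{vol}(\bigcap_q\tfrac1q Q_q)=\mathrm{vol}(Q)$; together with the lower bound this gives $\limsup_m\mathrm{vol}(\tfrac1m C_m)=\mathrm{vol}(Q)$ (in fact a limit), and multiplying by $n!$ finishes the proof.

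The main obstacle is exactly the Shapley--Folkman step. Although $\Sigma_N$ always has convex hull $NP_{\mathrm{a}_q}$, a general up‑set with a prescribed convex hull can have a far larger staircase complement (already for $N=1$, $\#(\mathbb{N}^n\setminus\Lambda_q)=\mathrm{vol}(C_q)$ may strictly exceed $\mathrm{vol}(Q_q)$), so one must see that iterating the graded‑system multiplication genuinely convexifies the staircase, filling it in up to a collar of bounded width as $N\to\infty$. Making the bound on the number of exceptional summands interact correctly with the unbounded recession cone $\mathbb{R}^n_{\geq 0}$—for instance by first reducing to the bounded polytope $\mathrm{conv}(\text{minimal generators of }\mathrm{a}_q)$, absorbing the cone part into an integral translation, and then controlling the integrality of the Shapley--Folkman decomposition—is the technical heart; everything else is bookkeeping with lattice‑point versus volume estimates.
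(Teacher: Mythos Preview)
The paper does not prove this proposition: it simply cites Theorem~1.7 and Lemma~2.13 of \cite{Mustata02}. Your proposal is therefore not a reconstruction of the paper's argument but an independent combinatorial attack on the monomial case. Your lower bound $\mathrm{vol}(\tfrac1m C_m)\ge\mathrm{vol}(Q)$ is correct, and your Shapley--Folkman convexification does yield the upper bound along each subsequence $m=Nq$. That step can in fact be replaced by the classical Hilbert--Samuel statement $\mathrm{length}(R/\mathfrak a_q^{\,N})/N^n\to e(\mathfrak a_q)/n!=\mathrm{vol}(Q_q)$, which (together with $\mathfrak a_q^{\,N}\subseteq\mathfrak a_{Nq}$) already gives $\limsup_N \mathrm{length}(R/\mathfrak a_{Nq})/(Nq)^n\le \mathrm{vol}(\tfrac1q Q_q)$ without any Shapley--Folkman machinery; the convexification you build by hand is precisely what the passage from colength to multiplicity encodes.

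The genuine gap is the step you call ``routine'': passing from the subsequence $m=Nq$ to the full $\limsup$. Neither option you offer is routine. Writing $m=Nq+s$ gives only $\Lambda_m\supseteq\Sigma_N+\Lambda_s$, and you cannot bound $\#\bigl(\mathbb N^n\setminus(\Sigma_N+\Lambda_s)\bigr)$ by $\#(\mathbb N^n\setminus\Sigma_N)+O(N^{n-1})$ via a single translate, since for $n\ge 2$ and any $v\ne 0$ the set $\mathbb N^n\setminus(\Sigma_N+v)$ is infinite; crude bounds through generators introduce an unwanted multiplicative constant. Your alternative --- that $\mathrm{length}(R/\mathfrak a_m)/m^n$ converges by a ``standard Fekete-type fact'' --- would need subadditivity of $m\mapsto\mathrm{length}(R/\mathfrak a_m)$, but the only available inequality is $\mathrm{length}(R/\mathfrak a_{m+m'})\le\mathrm{length}\bigl(R/(\mathfrak a_m\mathfrak a_{m'})\bigr)$, and the right-hand side is not subadditive: for $I=J=(x,y)\subset K[x,y]$ one has $\mathrm{length}(R/IJ)=3>1+1$. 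What one actually needs is the Minkowski--Teissier inequality $e(IJ)^{1/n}\le e(I)^{1/n}+e(J)^{1/n}$ for multiplicities (giving subadditivity of $m\mapsto e(\mathfrak a_m)^{1/n}$ and hence convergence of $e(\mathfrak a_m)/m^n$), together with the comparison between colength and multiplicity for a graded system. That comparison is precisely the content of Musta\c{t}\u{a}'s Theorem~1.7, so at this point your argument is either incomplete or tacitly invoking the very result the paper defers to.
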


\begin{proof}
This is an immediate consequence of Theorem 1.7 and Lemma 2.13 of \cite{Mustata02}.
\end{proof}

We now turn our attention to using the concept of the limiting shape to study the asymptotic behaviour of the system of ideals $\{ \text{gin}(I^{(m)})\}_m$ where $I$ is an ideal of $r$ distinct points in $\mathbb{P}^2$.  By Corollary \ref{cor:formofgin}, the ideals $\text{gin}(I^{(m)})$ for such an $I$ are generated in the variables $x$ and $y$ and contain a power of both $x$ and $y$.  Therefore, we can think of the ideals $\text{gin}(I^{(m)})$ as zero-dimensional in $K[x,y]$ and consider a two dimensional limiting shape $P$ of the symbolic generic initial system.

\begin{lem}
\label{lem:volume}
Suppose that $I$ is the ideal of $r$ distinct points $p_1, p_2, \dots, p_r$ in $\mathbb{P}^2$ and $J_m = \text{gin}(I^{(m)}) \subseteq K[x,y]$.  If $P$ is the limiting shape of $J_{\bullet}$ and $Q \subseteq \mathbb{R}^2$ is as above,
$$\text{vol}(Q) = \frac{r}{2}.$$
\end{lem}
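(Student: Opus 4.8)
The plan is to deduce $\text{vol}(Q)$ from Proposition \ref{prop:volumesequal} together with a direct computation of the algebraic volume $\text{vol}(J_\bullet)$, which in turn comes from the (well-known) Hilbert polynomial of the fat point scheme $Z_m = m(p_1 + \cdots + p_r)$.

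First I would check that $J_\bullet = \{J_m\}$ is a graded system of \emph{zero-dimensional} monomial ideals in $K[x,y]$, so that Proposition \ref{prop:volumesequal} applies with $n = 2$ and gives $\text{vol}(J_\bullet) = 2\,\text{vol}(Q)$. Zero-dimensionality is immediate from Corollary \ref{cor:formofgin}, since each $\text{gin}(I^{(m)})$ contains a power of $x$ and a power of $y$; the inclusions $J_i J_j \subseteq J_{i+j}$ in $K[x,y]$ follow from the corresponding inclusions in $K[x,y,z]$ (proved just before Definition \ref{defn:algebraicvolume}), because for monomial ideals extension from $K[x,y]$ to $K[x,y,z]$ and restriction back both commute with taking products.

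The heart of the argument is the identification $\mathrm{length}(K[x,y]/J_m) = \deg Z_m$. By Proposition \ref{prop:saturations} the ideal $\text{gin}(I^{(m)})$ is saturated, and by Proposition \ref{prop:geninxy} (equivalently, by Corollary \ref{cor:formofgin}) none of its minimal generators involves $z$; hence $\text{gin}(I^{(m)}) = J_m \cdot K[x,y,z]$. Counting the monomials $x^a y^b z^c$ of a fixed degree $t$ lying outside $J_m \cdot K[x,y,z]$ --- namely those with $x^a y^b \notin J_m$, of which there is exactly one for each of the finitely many $x^a y^b \notin J_m$ as soon as $t$ exceeds all the relevant $a+b$ --- shows that the Hilbert polynomial of $K[x,y,z]/\text{gin}(I^{(m)})$ is the constant $\mathrm{length}(K[x,y]/J_m)$. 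By Theorem \ref{thm:commonproperties} this is also the Hilbert polynomial of $R/I^{(m)} = R/I_{Z_m}$, which for the zero-dimensional scheme $Z_m$ equals $\deg Z_m = \sum_{i=1}^r \binom{m+1}{2} = r\binom{m+1}{2}$. Thus $\mathrm{length}(K[x,y]/J_m) = r\binom{m+1}{2}$.

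Finally I would substitute into Definition \ref{defn:algebraicvolume}:
$$\text{vol}(J_\bullet) = \limsup_{m \to \infty} \frac{2 \cdot r\binom{m+1}{2}}{m^2} = \limsup_{m \to \infty} \frac{r\,m(m+1)}{m^2} = r,$$
and conclude from $\text{vol}(J_\bullet) = 2\,\text{vol}(Q)$ that $\text{vol}(Q) = \tfrac{r}{2}$. The only step with real content is the middle one --- recognizing $\text{gin}(I^{(m)})$ as extended from $K[x,y]$ and matching $\mathrm{length}(K[x,y]/J_m)$ with $\deg Z_m$ --- and it is worth noting that general position of the points plays no role here, since $\deg Z_m$ depends only on $r$ and $m$.
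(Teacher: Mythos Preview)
Your argument is correct, and it reaches the same numerical conclusion as the paper by a somewhat different route. The paper computes $\mathrm{length}(K[x,y]/J_m)$ by reducing $I^{(m)}$ modulo a general linear form $h$: it uses the isomorphism $K[x,y,z]/(h)\cong K[x,y]$ together with Green's result relating $\mathrm{gin}(I^{(m)})$ to the generic initial ideal of the hyperplane section, and then computes the length directly as $r\binom{m+1}{2}$ via the Chinese Remainder Theorem on the images of the point ideals. You instead stay entirely in $K[x,y,z]$ and observe that, because $\mathrm{gin}(I^{(m)})$ is extended from $K[x,y]$, the Hilbert polynomial of $R/\mathrm{gin}(I^{(m)})$ is the constant $\mathrm{length}(K[x,y]/J_m)$; then Theorem~\ref{thm:commonproperties} identifies this with the Hilbert polynomial of $R/I^{(m)}$, i.e.\ with $\deg Z_m = r\binom{m+1}{2}$.

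Both approaches are short and rely only on results already stated in the paper. Your version has the advantage of avoiding the hyperplane-section machinery (and the appeal to Corollary~2.5 of \cite{Green98}) altogether; the paper's version is perhaps more concrete in that it literally passes to two variables rather than arguing via Hilbert polynomials. Your closing remark that general position is irrelevant here is also on point: the paper's proof likewise uses only that the points are distinct.
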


\begin{proof}
Let $h = ax+by+cz$ be a general linear form in $K[x,y,z]$.  To reduce our calculations to $K[x,y]$, consider the ring isomorphism 
$$\phi: \frac{K[x,y,z]}{(h)} \rightarrow K[x,y]$$
 given by sending $x$ to $x$, $y$ to $y$, and $z$ to $-\frac{a}{c}x - \frac{b}{c}y$.  If $I_i\subseteq K[x,y,z]$ is the ideal of the point $p_i$ in $\mathbb{P}^2$ then $\phi(\overline{I_i}) \cong (x,y)^m$.  Further, $\phi(\overline{I^{(m)}}) = \phi(\overline{I_1^m} \cap \cdots \cap \overline{I_r^m})$ and $\text{length} \Big( \frac{K[x,y]}{(x,y)^m} \Big) = {m+1\choose 2}$ so

\begin{eqnarray*}
\text{length} \bigg( \frac{K[x,y]}{\phi(\overline{I^{(m)}})}\bigg) &=& \text{length} \bigg( \frac{K[x,y]}{I_1^m} \times \cdots \times \frac{K[x,y]}{I_r^m} \bigg)\\
&=& \text{length} \bigg(  \frac{K[x,y]}{(x,y)^m} \times \cdots \times \frac{K[x,y]}{(x,y)^m} \bigg)\\
&=&r(1+ \cdots +m).
\end{eqnarray*}

The fact that $\text{gin}(I^{(m)})$ is generated in $x$ and $y$ (Proposition \ref{prop:geninxy}) together with a well-known relation between the generic initial ideals of $J$ and $\phi(\overline{J})$ (see Corollary 2.5 of \cite{Green98}) imply that $\text{gin}(I^{(m)})$ and $\text{gin}(\phi(\overline{I^{(m)}}))$ have the same generators.  Thus, thinking of $\text{gin}(I^{(m)})$ as being contained in $K[x,y]$,

\begin{eqnarray*}
\text{length} \bigg( \frac{K[x,y]}{\text{gin}(I^{(m)})}\bigg) &=& \text{length} \bigg( \frac{K[x,y]}{\text{gin}(\phi(\overline{I^{(m)}})}\bigg)\\
&=& \text{length} \bigg( \frac{K[x,y]}{\phi(\overline{I^{(m)}})}\bigg)\\
&=& r(1+\cdots +m) = r\Big( \frac{m^2+m}{2} \Big).
\end{eqnarray*}

Therefore,
\begin{eqnarray*}
\text{vol}(Q) &=& \lim_{m \rightarrow \infty} \frac{\text{length}(K[x,y]/\text{gin}(I^{(m)}))}{m^2}\\
&=& \lim_{m \rightarrow \infty} \frac{(m^2+m)r}{2m^2} \\
&=& \frac{r}{2}.
\end{eqnarray*}
\end{proof}

If $I$ is the ideal of distinct points in $\mathbb{P}^2$, the minimal generating set of each ideal $\text{gin}(I^{(m)})$ contains a power of $x$ and a power of $y$, say $x^{\alpha(m)}$ and $y^{\zeta(m)}$ by Corollary \ref{cor:formofgin}.  It is clear that $\lim_{m\rightarrow \infty} \frac{\alpha(m)}{m}$ and $\lim_{m\rightarrow \infty} \frac{\zeta(m)}{m}$ are the $x$- and $y$-intercepts  of the limiting shape $P$ of $\{\text{gin}(I^{(m)})\}_m$.

\begin{cor}
\label{cor:InterceptsDetermineShape}
Let $I \subseteq K[x,y,z]$ be the ideal of $r$ distinct points in $\mathbb{P}^2$ and $P$ be the limiting shape of the symbolic generic initial system $\{  \text{gin}(I^{(m)}) \}_m$.  Suppose that the $x$-intercept $\gamma_1$ and the $y$-intercept $\gamma_2$ of the boundary of $P$ are such that $\gamma_1 \cdot \gamma_2 = r$.  Then the limiting polytope $P$ has a boundary defined by the line passing through $(\gamma_1, 0)$ and $(0, \gamma_2)$.
\end{cor}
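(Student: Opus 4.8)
The plan is to exploit the two constraints we already have on the limiting shape $P$: the volume constraint from Lemma \ref{lem:volume} (via Proposition \ref{prop:volumesequal}), which forces $\mathrm{vol}(Q) = \frac{r}{2}$, and the prescribed intercepts $\gamma_1, \gamma_2$ of the boundary of $P$. The key structural fact is that $P$ is a convex region in $\mathbb{R}^2_{\geq 0}$ that is ``upward closed'' — since each $P_{J_m}$ is the Newton polytope of a monomial ideal, $P + \mathbb{R}^2_{\geq 0} = P$, and hence $Q = \overline{\mathbb{R}^2_{\geq 0} \setminus P}$ is the region bounded by the axes and the boundary curve of $P$. Because $P$ is convex, this boundary curve is the graph of a convex decreasing function; the region $Q$ it cuts off from the first quadrant therefore always contains the triangle with vertices $(0,0)$, $(\gamma_1, 0)$, $(0, \gamma_2)$.

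First I would record that $Q$ lies between this inner triangle and the axis-aligned rectangle (or more precisely, that the boundary of $P$ meets the axes exactly at $(\gamma_1,0)$ and $(0,\gamma_2)$, as noted in the paragraph preceding the corollary, so $Q$ is squeezed between the triangle $T$ of area $\frac{1}{2}\gamma_1\gamma_2$ and whatever larger convex region the actual boundary traces). The triangle $T$ has area $\frac{1}{2}\gamma_1 \gamma_2 = \frac{r}{2}$ by hypothesis. Next I would invoke convexity of $P$: the boundary of $P$ is a convex curve joining $(\gamma_1, 0)$ to $(0,\gamma_2)$ lying in the region ``above'' the chord between these points (since $P$ is convex and contains points with both coordinates large), so $Q \supseteq T$, giving $\mathrm{vol}(Q) \geq \frac{1}{2}\gamma_1\gamma_2 = \frac{r}{2}$. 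Combined with $\mathrm{vol}(Q) = \frac{r}{2}$ from Lemma \ref{lem:volume}, we get $\mathrm{vol}(Q) = \mathrm{vol}(T)$, and since $T \subseteq Q$ with equal (finite, positive) area and both are closed, $Q = T$. Therefore the boundary of $P$ coincides with the hypotenuse of $T$, i.e. the line through $(\gamma_1, 0)$ and $(0, \gamma_2)$, as claimed.

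The step requiring the most care is justifying that the boundary of $P$ is a convex curve lying weakly above the chord from $(\gamma_1,0)$ to $(0,\gamma_2)$ — equivalently that $Q$ contains $T$. This follows from two observations that should be spelled out: (i) $P$ is convex, being an increasing union of the convex polytopes $\frac{1}{q}P_{J_q}$ (nestedness is stated in the preliminaries), and (ii) $P$ is stable under adding $\mathbb{R}^2_{\geq 0}$, because each $P_{J_q}$ is the Newton polytope of a monomial ideal and hence already has this property, a property preserved under scaling and union. From (i) and (ii), any point on the segment from $(\gamma_1, 0)$ to $(0, \gamma_2)$, together with everything northeast of it, lies in $P$; hence the complement $Q$ is contained in the closed triangle $T$, and conversely the boundary curve of $P$ cannot dip below the chord without violating convexity together with the fact that $P$ contains points far out along both axes directions — so in fact $Q \supseteq T$. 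Once $Q = T$ is established, reading off the boundary of $P$ as the line through the two given points is immediate.
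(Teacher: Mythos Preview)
Your overall strategy is exactly the paper's: convexity of $P$ forces one inclusion between $Q$ and the triangle $T$ with vertices $(0,0)$, $(\gamma_1,0)$, $(0,\gamma_2)$, and then the volume identity $\mathrm{vol}(Q)=r/2=\tfrac12\gamma_1\gamma_2$ from Lemma~\ref{lem:volume} pins $Q=T$. However, you have the inclusion the wrong way around, and this matters for the logic.

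Since $P$ is convex and upward-closed, the points $(\gamma_1,0)$ and $(0,\gamma_2)$ lie in $\overline{P}$, hence so does the chord between them and everything northeast of it. Writing the boundary as $y=f(x)$, convexity of $P$ (i.e.\ of the epigraph of $f$) means $f$ is a \emph{convex} function with $f(0)=\gamma_2$, $f(\gamma_1)=0$; a convex function lies \emph{below} its secant line, so the boundary sits below the chord, not above it. Consequently $Q\subseteq T$, giving $\mathrm{vol}(Q)\le \tfrac12\gamma_1\gamma_2=\tfrac r2$, and equality with Lemma~\ref{lem:volume} forces $Q=T$. Your claim ``$Q\supseteq T$'' (and the assertion that the boundary cannot dip below the chord) is precisely backwards: convexity of $P$ allows the boundary to bulge toward the origin, shrinking $Q$; it is the volume constraint, not convexity, that rules this out. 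You in fact state the correct inclusion once in your last paragraph (``the complement $Q$ is contained in the closed triangle $T$'') before reversing yourself---keep that sentence and drop the ``conversely'' clause and the earlier ``$Q\supseteq T$'' claims, and the argument is complete and identical to the paper's.
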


\begin{proof}
The smallest possible limiting shape $P$ satisfying the given conditions is the one defined by the line segment through $(\gamma_1, 0)$ and $(0, \gamma_2)$ since $P$ is convex by definition.   This extreme case is the only one in which the maximum volume under $P$ is achieved, in which case $\text{vol}(Q) = \frac{\gamma_1 \gamma_2}{2}$.  Under the assumptions stated, $\gamma_1 \cdot \gamma_2 = r$ so, by the previous lemma, the maximum volume must be attained and $P$ is as claimed.
\end{proof}

\section{The Symbolic Generic Initial System of Greater than 8 Uniform Points in General Position}
\label{sec:largenumber}

Throughout this section, $I \subseteq R[x,y,z]$ will denote the ideal of $r \geq 9$ points $p_1, \dots, p_r$ of $\mathbb{P}^2$ in general position.  We will frequently use the fact that the Hilbert function of an ideal and its generic initial ideal are equal (see Theorem \ref{thm:commonproperties}).

Computing the Hilbert functions of ideals of fat points in $\mathbb{P}^2$ can be very difficult.  However, the following conjecture of Segre, Harbourne, Gimigliano, and Hirschowiz proposes that when $Z$ is the ideal of  $r \geq 9$ uniform fat points in general position, $H_{I_Z}(t)$ has a very simple form.  See \cite{Harbourne12} for a statement similar to what follows and \cite{Harbourne02} for more general versions of the conjecture.


\begin{conj}[SHGH Conjecture]
\label{conj:shgh}
Let $R=K[x,y,z]$ and $I$ be the ideal of $r \geq 9$ generic points $p_i \in \mathbb{P}^2$.  Then, if $I^{(m)}$ is the ideal of the uniform fat point subscheme $Z = m(p_1+\cdots+p_r)$,
$$H_{I^{(m)}}(t) = \max \bigg \{ {t+2 \choose 2} - r {m+1 \choose 2}, 0 \bigg \}.$$
\end{conj}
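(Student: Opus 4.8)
The final statement is the SHGH Conjecture itself, which is a famous open problem; what follows is therefore not a claimed proof but a sketch of the only realistic line of attack, together with a frank account of why I expect it to stall exactly where the subject has stalled. The plan is to pass to the blow-up $\pi\colon X\to\mathbb{P}^2$ at the $r$ points, with exceptional curves $E_1,\dots,E_r$, $E=E_1+\cdots+E_r$, and $H=\pi^*(\text{line})$. For $D_{t,m}=tH-mE$ one has $(I^{(m)})_t\cong H^0(X,\mathcal{O}_X(D_{t,m}))$, and Riemann--Roch on the surface $X$ (using $K_X=-3H+E$, $D_{t,m}^2=t^2-rm^2$, $D_{t,m}\cdot K_X=-3t+rm$) gives $\chi(\mathcal{O}_X(D_{t,m}))=\binom{t+2}{2}-r\binom{m+1}{2}$, which is precisely the right-hand side of the conjecture. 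Since $K_X-D_{t,m}=-(3+t)H+(m+1)E$ meets the nef class $H$ negatively for $t\ge0$, it is not effective, so $h^2(D_{t,m})=0$ by Serre duality. Hence the conjecture is equivalent to $h^0(D_{t,m})=\max\{\chi(D_{t,m}),0\}$, i.e.\ to the conjunction of an \emph{emptiness} statement ($h^0=0$ whenever $\chi\le0$) and a \emph{non-speciality} statement ($h^1=0$ whenever $\chi\ge0$).

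Second, I would try to make these two statements amenable to a simultaneous induction on $(r,m,t)$. The Harbourne--Hirschowitz reformulation says that $|D_{t,m}|$ should be non-special unless a $(-1)$-curve lies in its base locus with multiplicity $\ge2$; for the \emph{symmetric} divisor $D_{t,m}$ one expects that, outside the trivially empty range, no such curve occurs, so the true content is to exclude unexpected curves from the base locus. To attack this I would degenerate the pair $(\mathbb{P}^2,\{p_i\})$ in the style of Ciliberto--Miranda: specialize $\mathbb{P}^2$ to a transverse union of two rational surfaces, distribute the fat points between the two components, and use the Mayer--Vietoris-type sequence relating $H^0$ and $H^1$ on the central fibre to the "restricted" and "kernel" linear systems on the components; by semicontinuity, emptiness and non-speciality for the degenerate configuration imply the same for the general one. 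Iterating peels off points and lowers the degree, the base cases being small $r$ (settled by classical Castelnuovo theory and the structure of the anticanonical model of del Pezzo surfaces), the case $m=1$ (the Hilbert function of $r$ general simple points, which is classical), the "large $t$" regime (handled by Nagata-type positivity of $D_{t,m}$), and degenerate configurations with infinitely near or collinear points treated \`a la \'Evain.

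The main obstacle is the one the area has not overcome: after degenerating, one must match the transversal linear systems across the double curve \emph{while controlling $h^1$}, so that the induction genuinely closes rather than producing an a priori unbounded list of exceptional triples $(t,m,r)$. Equivalently, one must rule out unexpected special curves — the $(-1)$-curve obstructions of Harbourne--Hirschowitz, and, for the uniform system, the Nagata-type obstruction that would force $\alpha(I^{(m)})\ge\sqrt r\,m$ — uniformly in all parameters. Since Nagata's conjecture (a consequence of SHGH for non-square $r\ge10$) is itself open, I do not expect this plan to succeed in full; at best it recovers the known partial cases (small $r$, bounded multiplicity $m$, quasi-homogeneous configurations), which is exactly why the present paper assumes the conjecture for infinitely many $m$ rather than proving it.
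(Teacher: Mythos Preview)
You have correctly identified the crucial point: the statement is a \emph{conjecture}, and the paper does not attempt to prove it. It is stated as Conjecture~3.1 (the SHGH Conjecture) and used as a standing assumption in the hypothesis of Theorem~1.1(a), which explicitly reads ``if the SHGH Conjecture holds for infinitely many $m$.'' There is no proof in the paper to compare against.

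Your write-up is appropriate in that it acknowledges this openly and does not pretend to give a proof. The Riemann--Roch reduction you outline (pass to the blow-up, compute $\chi$, kill $h^2$ by Serre duality, reduce to emptiness plus non-speciality) is the standard and correct reformulation, and your discussion of the Ciliberto--Miranda degeneration strategy and its known obstructions is an accurate summary of the state of the art. The only remark I would add is that, for the paper's purposes, even the full conjecture is not needed: Theorem~1.1(a) requires only that the conjectured Hilbert function hold for an infinite set of values of $m$, and you note correctly that this weaker statement is already known unconditionally in some cases (e.g.\ $r$ a perfect square, by Harbourne--Ro\'e and Evain), which the paper itself cites.
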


The SHGH Conjecture is known to hold for certain special cases.  For example, it holds for infinitely many $m$ when $r$ is a square by \cite{HR04}, and for all $m$ when $r$ is a square not divisible by a prime bigger than 5 by \cite{Evain99}.   


The main goal of this section is to prove the first part of Theorem \ref{thm:mainthm}.

\newtheorem*{thm:mainthma}{Theorem \ref{thm:mainthm}(a)}
\begin{thm:mainthma}
\label{thm:LargeAsymptoticBehaviour}
Fix $r \geq 9$ points of $\mathbb{P}^2$ in general position and suppose that the SHGH Conjecture \ref{conj:shgh} holds for infinitely many $m$.  Let $I$ be the ideal of $r$ general points in $\mathbb{P}^2$ and $P$ be the the limiting shape of the reverse lexicographic symbolic generic initial system $\{\text{gin}(I^{(m)})\}_m$.  Then the boundary of $P$ is defined by the line through the points $(\sqrt{r},0)$ and $(0, \sqrt{r})$.
\end{thm:mainthma}

The proof of this statement is contained in Section \ref{sec:proofofLargeNumber}.  In preparation for this proof, we compute the minimal generators of the generic initial ideals $\text{gin}(I^{(m)})$ in Section \ref{sec:StructureofLargeNumber} under the assumption that the SHGH Conjecture holds.

\subsection{Structure of $\text{gin}(I^{(m)})$}
\label{sec:StructureofLargeNumber}

The following lemma records the degree of the smallest degree element of $I^{(m)}$.

\begin{lem}
\label{lem:alphavalue}
Let $I$ be the ideal of $p_1, \dots, p_r$ points of $\mathbb{P}^2$ in general position where $r \geq 9$ and suppose that $\alpha(m)$ is the least integer $t$ such that $H_{I^{(m)}}(t) >0$. Then, if the SHGH Conjecture holds for $Z=m(p_1+\cdots p_r)$, 
$$\alpha(m)  =\bigg  \lfloor -\frac{1}{2} + \sqrt{\frac{1}{4} + rm^2+rm} \bigg \rfloor.$$
\end{lem}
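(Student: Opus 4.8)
The plan is to extract $\alpha(m)$ directly from the SHGH formula for the Hilbert function of $I^{(m)}$. By definition $\alpha(m)$ is the least $t$ with $H_{I^{(m)}}(t) > 0$, and the SHGH Conjecture asserts
$$H_{I^{(m)}}(t) = \max\left\{ \binom{t+2}{2} - r\binom{m+1}{2},\ 0 \right\}.$$
So $H_{I^{(m)}}(t) > 0$ precisely when $\binom{t+2}{2} > r\binom{m+1}{2}$, i.e. when $(t+2)(t+1) > r m(m+1) = r m^2 + r m$. Thus $\alpha(m)$ is the least integer $t$ satisfying the quadratic inequality $t^2 + 3t + 2 > r m^2 + r m$, equivalently $t^2 + 3t + (2 - rm^2 - rm) > 0$.

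First I would solve this quadratic in $t$: the inequality $t^2 + 3t + 2 - rm^2 - rm > 0$ holds (for $t \geq 0$) exactly when $t > \frac{-3 + \sqrt{9 - 4(2 - rm^2 - rm)}}{2} = -\frac{3}{2} + \sqrt{\frac{1}{4} + rm^2 + rm}$. Hence the least such integer is $\alpha(m) = \left\lfloor -\frac{3}{2} + \sqrt{\frac{1}{4} + rm^2 + rm} \right\rfloor + 1$ in general. To match the stated formula $\alpha(m) = \left\lfloor -\frac{1}{2} + \sqrt{\frac{1}{4} + rm^2 + rm} \right\rfloor$, I would show that $\left\lfloor x \right\rfloor + 1 = \left\lfloor x + 1 \right\rfloor$ applies here, i.e. that adding $1$ to $-\frac{3}{2} + \sqrt{\cdots}$ to get $-\frac{1}{2} + \sqrt{\cdots}$ commutes with the floor — which is automatic since $1$ is an integer, $\lfloor y \rfloor + 1 = \lfloor y+1 \rfloor$ always. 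The only subtlety is the boundary case: if $-\frac{3}{2} + \sqrt{\frac14 + rm^2+rm}$ happens to be an integer $n$, then the least integer strictly greater is $n+1$, and indeed $\lfloor n+1 \rfloor = n+1$, consistent with $\lfloor -\frac12 + \sqrt{\cdots}\rfloor = \lfloor n+1 \rfloor = n+1$. So the formula holds uniformly, and I would just remark that the strict inequality in the SHGH formula is what shifts the floor by the extra $+1$ absorbed into the $-\frac12$.

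The main (and really only) obstacle is bookkeeping with the floor function and the strict versus non-strict inequality: one must be careful that $\alpha(m)$ is defined via $H_{I^{(m)}}(\alpha(m)) > 0$ (strict), so $\alpha(m)$ is the smallest integer strictly exceeding the larger root of the quadratic, not the smallest integer at least as large as it. Handling the edge case where the root is itself an integer cleanly — and confirming it still yields the displayed closed form — is the one point requiring care. Everything else is the routine quadratic formula manipulation: completing the square in $t^2 + 3t + 2 - rm(m+1)$ and simplifying $\frac{-3+\sqrt{1+4rm(m+1)}}{2} + 1 = \frac{-1 + \sqrt{1 + 4rm^2 + 4rm}}{2} = -\frac12 + \sqrt{\frac14 + rm^2 + rm}$.
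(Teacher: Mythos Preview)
Your proposal is correct and follows essentially the same route as the paper: apply the SHGH formula, reduce to the quadratic inequality $t^2+3t+2-rm^2-rm>0$, solve for the positive root $-\tfrac{3}{2}+\tfrac{1}{2}\sqrt{1+4rm^2+4rm}$, and take the floor after adding $1$. If anything, you are slightly more careful than the paper in explicitly checking the boundary case where the root is an integer and in justifying $\lfloor x\rfloor+1=\lfloor x+1\rfloor$.
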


\begin{proof}
By the SHGH Conjecture, $\alpha(m)$ is the smallest integer $t$ such that ${t+2 \choose 2} - r{m+1 \choose 2} > 0$.
\begin{eqnarray*}
&&\frac{(t+2)(t+1)}{2} - r\frac{(m+1)m}{2} >0\\
&\Leftrightarrow& t^2+3t+2-rm^2-rm > 0
\end{eqnarray*}
If this is an equality, the positive root is 
$$t = - \frac{3}{2} + \frac{1}{2} \sqrt{1+4rm^2+4rm}.$$
Then the least integer that will make the expression positive is 
$$\alpha(m)  = \bigg \lfloor -\frac{3}{2} + \frac{1}{2}\sqrt{1 + 4rm^2+4rm} +1 \bigg \rfloor.$$
\end{proof}

If the SHGH Conjecture holds, the structure of the generic initial ideals $\text{gin}(I^{(m)})$ is very simple.  
\begin{prop}
\label{prop:gensofgin}
Let $I$ be the ideal of $r \geq 9$ points of $\mathbb{P}^2$ in general position, fix a non-negative integer $m$, and suppose that the SHGH Conjecture holds for $I^{(m)}$.  Set $\alpha = \alpha(m)$ and $\eta := H_{I^{(m)}}(\alpha) = {\alpha+2 \choose 2} - r{m+1 \choose 2}$ so that $\eta \leq \alpha+1$.  Then 
$$\text{gin}(I^{(m)}) = (x^{\alpha}, x^{\alpha-1}y, \dots, x^{\alpha-\eta+1}y^{\eta-1}, x^{\alpha-\eta}y^{\eta+1}, x^{\alpha-\eta-1}y^{\eta+2}, \dots, xy^{\alpha}, y^{\alpha+1})$$
when $\eta < \alpha+1$ and
$$\text{gin}(I^{(m)}) = (x^{\alpha}, x^{\alpha-1}y, \dots, xy^{\alpha-1}, y^{\alpha})$$
when $\eta = \alpha+1$.
\end{prop}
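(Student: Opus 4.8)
The plan is to determine the generic initial ideal of $I^{(m)}$ by exploiting the fact that $\text{gin}(I^{(m)})$ is a saturated strongly stable monomial ideal in $x$ and $y$ (Corollary \ref{cor:formofgin}) with a prescribed Hilbert function, and that such an ideal is \emph{uniquely} determined by its Hilbert function. First I would recall the general structure from Corollary \ref{cor:formofgin}: the minimal generators are $x^\alpha, x^{\alpha-1}y^{\lambda_{\alpha-1}}, \dots, xy^{\lambda_1}, y^{\lambda_0}$ with $\lambda_0 > \lambda_1 > \cdots > \lambda_{\alpha-1} \geq 1$; in particular $\text{gin}(I^{(m)})$ contains $x^\alpha$ (the least degree $\alpha = \alpha(m)$ being exactly where the Hilbert function becomes positive, by Lemma \ref{lem:alphavalue} and Theorem \ref{thm:commonproperties}), and it contains a pure power $y^{\lambda_0}$. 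Then I would candidate-check: for the proposed ideal $J$ I would directly count $\dim J_t$ in each degree $t$ and verify it matches $H_{I^{(m)}}(t) = \max\{\binom{t+2}{2} - r\binom{m+1}{2}, 0\}$ given by the SHGH Conjecture, for all $t$. Since a strongly stable ideal in two variables is the unique saturated Borel-fixed ideal with its Hilbert function (this follows from the known bijection between such ideals and their Hilbert functions, or one can argue directly that the generators are forced degree by degree), matching Hilbert functions forces $\text{gin}(I^{(m)}) = J$.

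Concretely, the key computation is in degree $\alpha$: here $H_{I^{(m)}}(\alpha) = \eta$, so $\text{gin}(I^{(m)})_\alpha$ is an $\eta$-dimensional space spanned by monomials of degree $\alpha$, and because the ideal is strongly stable this space must be spanned by the $\eta$ largest monomials in the reverse lex order, namely $x^\alpha, x^{\alpha-1}y, \dots, x^{\alpha-\eta+1}y^{\eta-1}$. These give the first $\eta$ minimal generators. Next I would pass to degree $\alpha+1$: applying the strongly stable operators to the degree-$\alpha$ monomials produces $x^{\alpha+1}, x^\alpha y, \dots, x^{\alpha-\eta+2}y^{\eta}$, i.e. $\eta+1$ monomials of degree $\alpha+1$; but the SHGH formula gives $H_{I^{(m)}}(\alpha+1) = \binom{\alpha+3}{2} - r\binom{m+1}{2} = \eta + (\alpha+1)$, so there must be exactly $(\alpha+1) - 1 = \alpha$ new generators in degree $\alpha+1$ when $\eta < \alpha+1$. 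These new generators, being the next monomials down in reverse lex that are not already in the ideal and are compatible with strong stability, are forced to be $x^{\alpha-\eta}y^{\eta+1}, x^{\alpha-\eta-1}y^{\eta+2}, \dots, xy^\alpha, y^{\alpha+1}$. Once these are in, the ideal they generate already has the full (maximal) Hilbert function in every degree $\geq \alpha+1$, since in each such degree every monomial of that degree is divisible by one of the listed generators; so no further generators appear, and the description is complete. The case $\eta = \alpha+1$ is the boundary case where the degree-$\alpha$ piece already contains all $\alpha+1$ monomials $x^\alpha, \dots, y^\alpha$, so $\text{gin}(I^{(m)}) = (x^\alpha, x^{\alpha-1}y, \dots, xy^{\alpha-1}, y^\alpha)$ is the full power $(x,y)^\alpha$ and one checks its Hilbert function agrees.

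I expect the main obstacle to be the bookkeeping needed to verify that the proposed generating set genuinely produces the SHGH Hilbert function in \emph{every} degree, not just in degrees $\alpha$ and $\alpha+1$: one must check that for $t \geq \alpha+1$ every monomial $x^a y^{t-a}$ of degree $t$ is divisible by some listed generator (so that $J_t$ has dimension $\binom{t+2}{2} - \binom{m+1}{2}r = t+1+\eta+\dots$, the maximal value consistent with $R/J$ having the right colength), while for $t < \alpha$ the piece is empty and for $t = \alpha$ it has dimension exactly $\eta$. The arithmetic identity $\eta = \binom{\alpha+2}{2} - r\binom{m+1}{2}$ together with $\alpha = \alpha(m)$ being minimal forces $\eta \leq \alpha+1$, which is what makes the list $x^{\alpha-\eta+1}y^{\eta-1}, x^{\alpha-\eta}y^{\eta+1}$ have the claimed "gap" (the monomial $x^{\alpha-\eta}y^\eta$ is absent precisely because it would make the degree-$\alpha$ dimension $\eta+1$, contradicting $H_{I^{(m)}}(\alpha) = \eta$). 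A secondary point requiring care is justifying that a saturated strongly stable ideal is uniquely pinned down by its Hilbert function, or, to avoid invoking that, showing directly that $\text{gin}(I^{(m)})_t$ must equal $J_t$ in each degree by the strongly stable property — the latter is cleaner and is what I would write out, proceeding by induction on $t$ starting from $t = \alpha$.
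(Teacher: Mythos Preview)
Your approach is essentially the paper's: pin down the degree-$\alpha$ piece using Borel-fixedness (the $\eta$ largest monomials in $x,y$), then use the SHGH value $H_{I^{(m)}}(\alpha+1)$ to see that all $\alpha+2$ monomials of degree $\alpha+1$ in $x,y$ lie in $\text{gin}(I^{(m)})$, which forces the remaining generators by Corollary~\ref{cor:formofgin}. Just fix the arithmetic in your degree-$(\alpha+1)$ count: $H_{I^{(m)}}(\alpha+1)=\eta+(\alpha+2)$, not $\eta+(\alpha+1)$, and the number of new minimal generators in degree $\alpha+1$ is $\alpha+1-\eta$, not $\alpha$; these slips don't affect the method.
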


\begin{proof} Since there is no element of $\text{gin}(I^{(m)})$ of degree smaller than $\alpha(m)$, all monomials of degree $\alpha(m)$ in $\text{gin}(I^{(m)})$ must be generators and thus contain only the variables $x$ and $y$ by Proposition \ref{prop:geninxy}. There are at most $\alpha+1$ monomials of degree $\alpha$ in the variables $x$ and $y$ so $\eta := H_{\text{gin}(I^{(m)})}(\alpha) \leq \alpha+1$.

If $\eta = \alpha+1$ then all $\alpha+1$ monomials of degree $\alpha$ in the variables $x$ and $y$ are minimal generators of $\text{gin}(I^{(m)})$.  By Corollary \ref{cor:formofgin}, $\text{gin}(I^{(m)})$ has exactly $\alpha+1$ minimal generators.  Thus, all minimal generators of $\text{gin}(I^{(m)})$ are of degree $\alpha$ and are the ones given.

Now suppose that $\eta < \alpha+1$.  The $\eta$ monomials of $\text{gin}(I^{(m)})$ of degree $\alpha$ must be minimal generators.  In fact, since generic initial ideals are Borel-fixed, these must be the largest $\eta$ monomials in $x$ and $y$ of degree $\alpha$ with respect to the reverse lexicographic order: 
$$ [\text{gin}(I^{(m)})]_{\alpha}= \{ x^{\alpha}, x^{\alpha-1}y, \dots, x^{\alpha-\eta+1}y^{\eta-1} \}.$$
There are exactly $\eta$ elements of $\text{gin}(I^{(m)})$ of degree $\alpha+1$ involving the variable $z$, obtained by multiplying each of the $\eta$ generators of $[\text{gin}(I^{(m)})]_{\alpha}$ by $z$.  By the SHGH Conjecture \ref{conj:shgh},
\begin{eqnarray*}
H_{I^{(m)}}(\alpha+1) - \eta &=& \bigg[ {\alpha+1+2 \choose 2} - r{m+1\choose 2} \bigg] - \bigg[ {\alpha+2 \choose 2} - r{m+1\choose 2} \bigg]\\
&=& {\alpha+2+1 \choose 2} - {\alpha+2 \choose 2} \\
&=& {\alpha+2 \choose 1} = \alpha+2
\end{eqnarray*}
and there are $\alpha+2$ monomials in $\text{gin}(I^{(m)})$ of degree $\alpha+1$ containing only the variables $x$ and $y$.  Since there are exactly $\alpha+2$ monomials of degree $\alpha+1$ in $x$ and $y$,  $\text{gin}(I^{(m)})$ contains all of them.  Thus, the remaining generators of $\text{gin}(I^{(m)})$ are of degree $\alpha+1$; they are
$$x^{\alpha-\eta}y^{\eta+1}, x^{\alpha-\eta-1}y^{\eta+2}, \dots, xy^{\alpha}, y^{\alpha+1}$$
by Corollary \ref{cor:formofgin}.
\end{proof}

\subsection{Proof of Theorem \ref{thm:mainthm} (a)}
\label{sec:proofofLargeNumber}

\begin{proof}[Proof of Theorem~\ref{thm:mainthm} (a)]

By Proposition \ref{prop:gensofgin}, $x^{\alpha(m)}$ and $y^{\alpha(m)+1}$ or $y^{\alpha(m)}$ are the smallest variable powers contained in $\text{gin}(I^{(m)})$ for all $m$ such that the SHGH Conjecture holds.  Thus,  the $x$-intercept of the boundary of $P$ is 
$$\lim_{m \rightarrow \infty} \frac{\alpha(m)}{m}$$
while the $y$-intercept of the boundary of $P$ is 
$$\lim_{m \rightarrow \infty} \frac{\alpha(m)+1}{m} = \lim_{m \rightarrow \infty} \frac{\alpha(m)}{m}$$
where we take the limits over the infinite subset such that the SHGH Conjecture holds.

By Lemma \ref{lem:alphavalue}, 
\begin{eqnarray*}
\lim_{m \rightarrow \infty} \frac{\alpha(m)}{m} &=&  \lim_{m \rightarrow \infty} \frac{\big \lfloor -\frac{1}{2} + \sqrt{\frac{1}{4} + rm^2+rm} \big \rfloor}{m}\\
&=& \sqrt{r}
\end{eqnarray*}
so the $x$ and $y$ intercepts of the limiting shape $P$ are both equal to $\sqrt{r}$.  Since $\sqrt{r} \cdot \sqrt{r} = r$, Corollary \ref{cor:InterceptsDetermineShape} tells us that the boundary of $P$ is defined by the line through the $x$- and the $y$-intercepts as claimed.
\end{proof}

\section{The Symbolic Generic Initial System of 6, 7, and 8 Uniform Fat Points in General Position}
\label{sec:678}


As before, $I \subseteq R[x,y,z]$ will denote the ideal of points $p_1, \dots, p_r$ of $\mathbb{P}^2$ in general position.  The goal of this section is to prove the second part of Theorem~\ref{thm:mainthm}.  

\newtheorem*{thm:mainthmb}{Theorem \ref{thm:mainthm} (b)}
\begin{thm:mainthmb}
\label{thm:polytopefor678}
Suppose that $I \subseteq K[x,y,z]$ is the ideal of $r=6, 7, \text{or } 8$ points of $\mathbb{P}^2$ in general position and that $P \subseteq \mathbb{R}^2$ is the limiting shape of the reverse lexicographic symbolic generic initial system $\{ \text{gin} (I^{(m)})\}_m$.  Then the boundary of $P$ is defined by the line segment through the points $(\gamma_1, 0)$ and $(0, \gamma_2)$ where 
\begin{itemize}
\item[(a)] $\gamma_1 = \frac{12}{5}$ and $\gamma_2 = \frac{5}{2}$ when $r=6$;
\item[(b)]  $\gamma_1 = \frac{21}{8}$ and $\gamma_2 = \frac{8}{3}$ when $r=7$; and
\item[(c)] $\gamma_1 = \frac{48}{17}$ and $\gamma_2 = \frac{17}{6}$ when $r=8$.
\end{itemize}
\end{thm:mainthmb}

The proof of this result relies on knowing certain values of the Hilbert functions $H_{I^{(m)}}(t)$ of the ideals $I^{(m)}$ where $I$ is the ideal of 6, 7, or 8 general points.  Techniques for computing $H_{I^{(m)}}(t)$ in these cases are not new (for example, see \cite{Nagata60}), but they can be complicated.  Thus, we take time in Section \ref{sec:678Background} to review a modern technique for finding the Hilbert functions, and then apply these results to the proof of Theorem \ref{thm:mainthm}(b) in Section \ref{sec:678Proof}.

\subsection{Background on Surfaces}
\label{sec:678Background}
The method we use to compute $H_{I^{(m)}}(t)$ follows the work of Fichett, Harbourne, and Holay in \cite{FHH01}.

Suppose that $\pi: X \rightarrow \mathbb{P}^2$ is the blow-up of distinct points $p_1, \dots, p_r$ of $\mathbb{P}^2$. Let $E_i = \pi^{-1}(p_i)$ for $i = 1, \dots, r$ and $L$ be the total transform in $X$ of a line  not passing through any of the points $p_1, \dots, p_r$.  The classes of these divisors form a basis of $\text{Cl}(X)$; for convenience, we will write $e_i$ for the class $[E_i]$ of $E_i$ and $e_0$ for the class $[L]$.  Further, the intersection product in $\text{Cl}(X)$ is defined by $e_i^2 = -1$ for $i=1, \dots, r$; $e_0^2 = 1$; and $e_i \cdot e_j = 0$ for all $i\neq j$.

Let $Z_m = m(p_1+\cdots +p_r)$ be a uniform fat point subscheme with sheaf of ideals $\mathcal{I}_{Z_m}$; set 
$${F}_d = dE_0 - m(E_1 + E_2 + \cdots +E_r)$$
and $\mathcal{F}_d = \mathcal{O}_X(F_d)$.  Then $\pi_{*}(\mathcal{F}_d) = \mathcal{I}_Z \otimes \mathcal{O}_{\mathbb{P}^2}(d)$ so 
$$\text{dim}((I_{Z_m})_d) = h^0(\mathbb{P}^2, \mathcal{I}_Z \otimes \mathcal{O}_{\mathbb{P}^2}(d)) = h^0(X, \mathcal{F}_d)$$
for all $d$.  In particular, if $I \subseteq K[x,y,z]$ is the ideal of the points $p_1, \dots, p_r$ in $\mathbb{P}^2$,
$$H_{I^{(m)}}(d) = h^0(X, \mathcal{F}_d)$$
and so we can study the Hilbert function of the symbolic powers $I^{(m)}$ by working with divisors on the surface $X$.   For convenience, we will often write $h^0(X, F) = h^0(X, \mathcal{O}_X(F))$.


Recall that if  $[{F}]$ not the class of an effective divisor then $h^0(X, {F}) = 0$.  On the other hand, if $F$ is effective, then we will see that we can compute $h^0(X,{F})$ by computing $h^0(X,{H})$ for some \textit{numerically effective} divisor $H$.  

\begin{defn}
A divisor $H$ is \textbf{numerically effective} if $[F] \cdot [H] \geq 0$ for every effective divisor $F$, where $[F] \cdot [H]$ denotes the intersection multiplicity.  The cone of classes of numerically effective divisors in $\text{Cl}(X)$ is denoted by NEF($X$).
\end{defn}

\begin{lem}
\label{lem:h0ofNEFF}
Suppose that $X$ is the blow-up of $\mathbb{P}^2$ at $r \leq 8$ points in general position and that $F \in \text{NEF}(X)$.  Then $F$ is effective and 
$$h^0(X, F)  = ([F]^2-[F]\cdot [K_X])/2+1$$
where $K_X = -3e_0 + e_1 + \cdots + e_r$.
\end{lem}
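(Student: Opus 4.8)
The plan is to compute $h^0(X,F)$ by Riemann--Roch on the surface $X$, once we know that $h^1(X,F)=h^2(X,F)=0$. First I would record the geometry of $X$ that is needed: since $p_1,\dots,p_r$ are $r\le 8$ points in general position, $X$ is a del Pezzo surface, so $-K_X$ is ample and $K_X^2=9-r\ge 1$; and since $X$ is rational, $\chi(\mathcal{O}_X)=1$ with $h^1(X,\mathcal{O}_X)=h^2(X,\mathcal{O}_X)=0$. Riemann--Roch on $X$ then reads
$$h^0(X,F)-h^1(X,F)+h^2(X,F)=\frac{[F]^2-[F]\cdot[K_X]}{2}+1,$$
so the lemma will follow as soon as we show $h^1(X,F)=h^2(X,F)=0$. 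Note moreover that, $F$ being nef, $[F]^2\ge 0$ and $-[F]\cdot[K_X]=[F]\cdot[-K_X]\ge 0$, so the right-hand side above is at least $1$; hence once $h^2(X,F)=0$ is known we get $h^0(X,F)\ge 1$, i.e.\ $F$ is effective.

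For the vanishing of $h^2$, I would use Serre duality, $h^2(X,F)=h^0(X,K_X-F)$, and argue that $K_X-F$ is not the class of an effective divisor. Indeed, any nonzero effective divisor $D$ has $D\cdot(-K_X)>0$ because $-K_X$ is ample, whereas $(K_X-F)\cdot(-K_X)=-K_X^2-[F]\cdot[-K_X]\le -K_X^2<0$; and $K_X-F=0$ is impossible, since it would force $[F]\cdot[-K_X]=-K_X^2<0$, contradicting nefness. Hence $h^2(X,F)=0$, and by the previous paragraph $F$ is effective.

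The main obstacle is the vanishing $h^1(X,F)=0$. Since $K$ has characteristic $0$, this follows from the Kawamata--Viehweg vanishing theorem: the class $D:=[F]-[K_X]=[F]+[-K_X]$ is a sum of a nef and an ample class, hence ample, in particular nef and big, so $h^i(X,K_X+D)=h^i(X,F)=0$ for all $i>0$. If one wishes to avoid vanishing theorems, the same conclusion follows from the classical fact that a nef divisor on a del Pezzo surface is base-point free: a general member $C\in|F|$ is then a smooth curve, and the exact sequence $0\to\mathcal{O}_X\to\mathcal{O}_X(F)\to\mathcal{O}_C(F|_C)\to 0$, combined with $h^1(X,\mathcal{O}_X)=0$, reduces the claim to $h^1(C,F|_C)=0$; the latter holds because $\deg(F|_C)=[F]^2$ exceeds $2g(C)-2=[F]^2+[F]\cdot[K_X]$ whenever $F\ne 0$ (and the case $F=0$ is trivial). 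Either way, substituting the three vanishings into Riemann--Roch yields the asserted formula for $h^0(X,F)$, which together with the effectivity established above completes the proof.
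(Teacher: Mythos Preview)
Your argument is correct and follows exactly the line the paper indicates: Riemann--Roch on $X$ together with the vanishing of $h^1(X,F)$ (and $h^2$). The paper's proof is only a one-line citation to \cite{Harbourne96} and \cite{FHH01} for that vanishing; you actually supply it, using Serre duality plus ampleness of $-K_X$ for $h^2$, and Kawamata--Viehweg (valid since the paper works in characteristic~$0$) for $h^1$, which is a welcome level of detail. One small caution about your alternative route through base-point freeness: the general Bertini member $C\in|F|$ need not be connected, so the degree/genus inequality should be checked on each component; this does go through (each irreducible component $C_i$ satisfies $(F-C_i)\cdot C_i\ge 0$ and $C_i\cdot(-K_X)>0$), but the Kawamata--Viehweg argument is the cleaner of the two.
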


\begin{proof}
This is a consequence of Riemann-Roch and the fact that $h^1(X, F) = 0$ for any numerically effective divisor $F$. See Theorem 8 of \cite{Harbourne96} or Section 1 of \cite{FHH01} for a discussion.
\end{proof}

\begin{cor}
\label{cor:h0ofFtNEFF}
Let $F_t = tL - m(E_1+E_2 + \cdots + E_r)$.  If $F_t$ is numerically effective then 
$$h^0(X,{F}_t) = {t+2 \choose 2}-r{m+1 \choose 2}.$$
\end{cor}

A divisor class $[C]$ on $X$ is said to be \textit{exceptional} if it is the class of an exceptional divisor $C$ on $X$ (that is, a smooth curve isomorphic to $\mathbb{P}^1$ such that $[C]^2 = -1$).\footnote{Note that if $[C]$ is an exceptional class, there is a unique effective divisor in this class, typically called the \textit{exceptional curve}}  The following result of Fichett, Harbourne, and Holay \cite{FHH01} tells us how to detect if a divisor is numerically effective if we know the exceptional curves.  

\begin{lem}[{Lemma 4(b) of \cite{FHH01}}]
\label{lem:IdentifyingNEFF}
Suppose that $X$ is the surface obtained by blowing up $2 \leq r \leq 8$ points of $\mathbb{P}^2$.  Then $F$ is numerically effective if the intersection multiplicity of $[F]$ with all exceptional classes  is greater than or equal to 0.
\end{lem}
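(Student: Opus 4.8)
The claim is equivalent to the assertion that the Mori cone $\overline{NE}(X)$ — the closed convex cone in $\mathrm{Cl}(X)\otimes\mathbb{R}$ generated by the classes of effective curves (equivalently, since $X$ is a surface, of effective divisors) — is spanned by the exceptional classes. Indeed, $\text{NEF}(X)$ is by definition the cone dual to $\overline{NE}(X)$, so $[F]$ is numerically effective exactly when it pairs non-negatively with the elements of some generating set of $\overline{NE}(X)$; since the reverse inclusion is automatic (exceptional curves are effective), it suffices to prove
$$\overline{NE}(X)\;=\;\sum_{C}\mathbb{R}_{\ge 0}\,[C],$$
the finite sum taken over all exceptional curves $C$ on $X$. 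The plan is to obtain this from the fact that, for $2\le r\le 8$ points in general position, $X$ is a del Pezzo surface, i.e. $-K_X$ is ample.

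First I would invoke Mori's Cone Theorem. Since $-K_X$ is ample, Kleiman's criterion gives $-K_X\cdot\xi>0$ for every $\xi\in\overline{NE}(X)\setminus\{0\}$, so $\overline{NE}(X)$ is rational polyhedral and each of its extremal rays is $K_X$-negative, hence spanned by the class of an irreducible rational curve $\ell$ with $0<-K_X\cdot\ell\le\dim X+1=3$. Next I would cut down the possibilities for $\ell$: a nonzero effective class of positive self-intersection is big and therefore lies in the interior of $\overline{NE}(X)$, so an extremal class satisfies $\ell^2\le 0$; combining this with $-K_X\cdot\ell\in\{1,2,3\}$, the adjunction formula $\ell^2+K_X\cdot\ell=2p_a(\ell)-2\ge-2$ leaves only $(\ell^2,-K_X\cdot\ell)=(-1,1)$ or $(0,2)$, both forcing $p_a(\ell)=0$. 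In the first case $\ell$ is an exceptional curve. The second case is precisely what the hypothesis $r\ge 2$ eliminates: for $r=1$ it genuinely occurs (the fibre class $e_0-e_1$ of the ruling of $\mathbb{F}_1$ spans an extremal ray and is not exceptional), but for $r\ge 2$ an isotropic class with $-K_X\cdot\ell=2$ is, after a suitable change of the basis $e_0,e_1,\dots,e_r$, of the form $e_0-e_i=(e_0-e_i-e_j)+e_j$, a positive sum of two exceptional classes, and so lies in the relative interior of a two-dimensional subcone of $\overline{NE}(X)$ and cannot span an extremal ray. Hence every extremal ray of $\overline{NE}(X)$ is spanned by an exceptional curve, which gives the displayed identity and the lemma.

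The step I expect to be the main obstacle is this last one: carrying out the case analysis on extremal rays cleanly, and in particular ruling out the isotropic ($\ell^2=0$) case — which is exactly where $2\le r$ is used, and where the degree-one surface $r=8$ (for which $-K_X$ is ample but $|-K_X|$ is only a pencil with a base point) must be handled with some care. A way to avoid re-deriving this is to appeal directly to the classical description of the effective cone of a del Pezzo surface obtained by blowing up $\mathbb{P}^2$ at $r\ge 2$ points as the cone on its finitely many exceptional curves (see \cite{FHH01} and the references therein), once one has checked that general position with $r\le 8$ forces $X$ to be a del Pezzo surface — in particular one with no $(-2)$-curves — and that $r\ge 2$ excludes extremal rays of self-intersection zero.
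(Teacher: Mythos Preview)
The paper does not prove this lemma at all; it is quoted directly from \cite{FHH01} as a black box for the computations in Section~\ref{sec:678}. Your argument supplies an actual proof, and a correct one: for $2\le r\le 8$ general points the blow-up is a del Pezzo surface, so the Cone Theorem makes $\overline{NE}(X)$ rational polyhedral with every extremal ray $K_X$-negative and spanned by a rational curve of anticanonical degree at most $3$; the adjunction bookkeeping then leaves only $(-1)$-curves and conic-bundle fibre classes $(\ell^2=0,\ -K_X\cdot\ell=2)$, and your decomposition $e_0-e_i=(e_0-e_i-e_j)+e_j$ into two exceptional classes correctly shows the latter are non-extremal once $r\ge 2$. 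This is the standard modern route to the structure of $\overline{NE}$ on a del Pezzo surface, so what you gain over the paper is a self-contained explanation rather than a citation. The only point worth a footnote --- and you already flag it --- is that the paper's literal definition of ``general position'' (no ${d+2 \choose 2}$ points on a degree-$d$ curve) does not, for $r=8$, exclude a nodal cubic through all eight points with its node at one of them, which would create a $(-2)$-curve and spoil the ampleness of $-K_X$; in \cite{FHH01}, and implicitly throughout this paper, the points are generic and this does not arise.
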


Another result from \cite{FHH01} tells us what the exceptional curves of $X$ are in the cases that we are interested in.

\begin{lem}[{Lemma 3(a) of \cite{FHH01}}]
\label{lem:exceptionalcurves}
Let $C$ be a curve on the blow-up $X$ of $\mathbb{P}^2$ at 8 points in general position.  Then, with the notation above, the exceptional classes are the following, up to permutation of indices $1, 2, \dots, 8$: 
\begin{multicols}{2}
\begin{itemize}
\item  $h_1=e_8$
\item $h_2=e_0-e_1-e_2$
\item $h_3=2e_0-e_1-\cdots -e_5$
\item $h_4=3e_0-2e_1-e_2-\cdots -e_7$
\item $h_5=4e_0-2e_1-2e_2-3e_3-e_4-\cdots -e_8$
\item $h_6=5e_0-2e_1-\cdots -2e_6-e_7-e_8$
\item $h_7=6e_0-3e_1-2e_2-\cdots -2e_8$.
\end{itemize}
\end{multicols}
When $X$ is the blow-up of $\mathbb{P}^2$ at $n \leq 8$ points, the exceptional classes of $X$ are the ones listed above with $8-n$ of the $e_i$ ($i=1, \dots, 8$) set to 0.
\end{lem}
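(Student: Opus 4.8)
The strategy is to split the statement into a combinatorial part (identifying the candidate classes) and a geometric part (that each candidate is realized by an exceptional curve and that these exhaust them), reducing the combinatorial part to a Diophantine enumeration via adjunction. I work on $X$, the blow-up of $\mathbb{P}^2$ at $r=8$ general points, and recover $r\leq 8$ at the end. An exceptional curve $C$ is a smooth rational curve with $[C]^2=-1$, so the adjunction formula $2p_a(C)-2=[C]^2+[C]\cdot[K_X]$ together with $p_a(C)=0$ forces $[C]\cdot[K_X]=-1$. Writing $[C]=de_0-\sum_{i=1}^8 m_i e_i$ and using $K_X=-3e_0+e_1+\cdots+e_8$, the conditions $[C]^2=-1$ and $[C]\cdot[K_X]=-1$ become
\[ d^2-\sum_{i=1}^8 m_i^2=-1, \qquad \sum_{i=1}^8 m_i=3d-1. \]
If $C$ is one of the exceptional divisors $E_j$, then $[C]=e_j$, which is $h_1$. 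Otherwise $C$ is irreducible and not contained in the exceptional locus, so $[C]\cdot[E_i]=m_i\geq 0$ for all $i$. Subtracting the two relations gives the key identity $\sum_{i=1}^8 m_i(m_i-1)=(d-1)(d-2)$, and Cauchy--Schwarz on the nonnegative integers $m_i$ gives $(3d-1)^2\leq 8(d^2+1)$, i.e. $(d-7)(d+1)\leq 0$, so $1\leq d\leq 7$.

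Next I would run the enumeration over $d=1,\dots,7$. Since $m\mapsto m(m-1)$ is convex, among all $8$-tuples of nonnegative integers with fixed sum $3d-1$ the quantity $\sum m_i(m_i-1)$ is minimized by the most balanced tuple; comparing this minimum with $(d-1)(d-2)$ disposes of almost everything. For $d=1,2$ the right-hand side is $0$, so all $m_i\in\{0,1\}$, giving two $1$'s ($h_2$) and five $1$'s ($h_3$). For $d=3,4,5,6$ one finds a unique admissible multiset of multiplicities, namely $(2,1^6,0)$, $(2^3,1^5)$, $(2^6,1^2)$ and $(3,2^7)$, which one recognizes as $h_4,h_5,h_6,h_7$ up to permutation of indices; for $d=7$ even the balanced tuple $(3^4,2^4)$ already gives $\sum m_i(m_i-1)=32>30$, so there is no solution. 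Hence every exceptional class is, up to permutation, one of $h_1,\dots,h_7$.

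For the converse, that for general points each $h_i$ really is an exceptional class, I would argue as follows. The class $h_1=e_j=[E_j]$ is obviously exceptional. For $F$ in any of $h_2,\dots,h_7$ one has $[F]^2=-1$ and $[F]\cdot[K_X]=-1$, so by Riemann--Roch (as in Lemma~\ref{lem:h0ofNEFF}) $\chi(\mathcal O_X(F))=([F]^2-[F]\cdot[K_X])/2+1=1$, while $h^2(X,F)=h^0(X,K_X-F)=0$ because $(K_X-F)\cdot[L]=-(3+d)<0$ and $[L]=e_0$ is nef (it is the pullback of an ample class); thus $h^0(X,F)\geq 1$ and $F$ is effective. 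Irreducibility then follows from the genericity hypothesis: one examines the possible decompositions of $[F]$ into classes of effective curves and shows that a reducible representative can occur only when the $p_i$ satisfy a proper algebraic condition, and once $[F]$ is represented by an irreducible curve, adjunction gives $2p_a-2=-2$, so $p_a=0$ and the curve is smooth rational of self-intersection $-1$. Finally, for $r\leq 8$: a general $r$-point configuration extends to a general $8$-point configuration, and $X_8$ is obtained from $X_r$ by blowing up further general points, which lie on none of the finitely many exceptional curves of $X_r$; comparing the two surfaces shows that the exceptional curves of $X_r$ are precisely those of $X_8$ whose classes have zero coefficient on $e_{r+1},\dots,e_8$, i.e. the listed classes with those $e_i$ set to $0$.

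The main obstacle is the irreducibility step of the converse: the enumeration is routine and effectivity is a one-line Riemann--Roch computation, but ruling out reducible representatives is exactly where the general-position hypothesis must enter essentially. Here I would either invoke \cite{FHH01} directly or carry out an induction on $r$ (trivial for $r\leq 2$), using at each step that blowing down a $(-1)$-curve, together with the attendant Cremona transformations, permutes the exceptional classes transitively --- the Weyl group $W(E_8)$ acting on $\mathrm{Cl}(X)$, under which the $h_1,\dots,h_7$ form a single orbit of $240$ classes --- so that irreducibility of all of them reduces to irreducibility of a single one, e.g. $E_8$.
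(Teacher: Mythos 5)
The paper offers no proof of this lemma at all --- it is imported verbatim as Lemma 3(a) of \cite{FHH01} --- so your proposal is being compared against a citation rather than an argument. What you write is the standard classical proof and it is essentially correct: adjunction forces $[C]\cdot[K_X]=-1$, which together with $[C]^2=-1$ gives the two Diophantine conditions $d^2-\sum m_i^2=-1$ and $\sum m_i=3d-1$; Cauchy--Schwarz bounds $d\le 7$; and the convexity of $m\mapsto m(m-1)$ makes the case-by-case enumeration clean. Your enumeration is in fact a useful check on the paper: for $d=4$ the unique multiplicity vector is $(2,2,2,1,1,1,1,1)$, so the printed $h_5=4e_0-2e_1-2e_2-3e_3-e_4-\cdots-e_8$ is a typo (as written it has self-intersection $-6$, not $-1$); the correct class $4e_0-2e_1-2e_2-2e_3-e_4-\cdots-e_8$ is the one the paper actually uses in its later intersection computations ($F_t\cdot h_5=4t-3\cdot 2m-3m$ for six points). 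The effectivity half of the converse via Riemann--Roch and $h^2=h^0(K_X-F)=0$ is fine. The one place where your argument is a sketch rather than a proof is exactly where you say it is: irreducibility of the effective representative for general points. The Weyl-group/Cremona reduction to the single class $e_8$ is the standard route, but making it rigorous requires knowing that quadratic transformations based at three general points carry general configurations to general configurations and send irreducible curves to irreducible curves off the base locus; since the paper itself delegates the whole statement to \cite{FHH01}, delegating this step there too is acceptable, but as written it is an outline, not a completed argument. The final reduction from $8$ to $n\le 8$ points (new general points miss the finitely many exceptional curves of $X_n$, so those curves survive with $m_i=0$ for $i>n$, and conversely) is correct.
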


It turns out that knowing how to compute $h^0(X, H)$ for a numerically effective divisor $H$ will actually allow us to compute $h^0(X, F)$ for \textit{any} divisor $F$.  In particular, for any divisor $F$, there exists a divisor $H$ such that $h^0(X, F) = h^0(X, H)$ and either: 
\begin{enumerate}
\item[(a)]  $H$ is numerically effective so $$h^0(X, F) = h^0(X, H) = (H^2-H\cdot K_X)/2+1$$ by Lemma \ref{lem:h0ofNEFF}; or
\item[(b)]  There is a numerically effective divisor $G$ such that $[G]\cdot [H] <0$ so $[H]$ is not the class of an effective divisor and $h^0(X, F) = h^0(X, H) = 0$.
\end{enumerate}

The following result will be used in Procedure \ref{proc:findH} to find such an $H$.

\begin{lem}
\label{lem:Equalh0}
Suppose that $[C]$ is an exceptional class such that $[F] \cdot [C] <0$.  Then $h^0(X, F) = h^0(X, F-C)$.
\end{lem}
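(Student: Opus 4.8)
The plan is to use the standard restriction sequence associated to the exceptional curve $C$. Recall (as noted in the footnote above) that an exceptional class $[C]$ contains a unique effective divisor, the exceptional curve, which is a smooth rational curve isomorphic to $\mathbb{P}^1$ with $[C]^2 = -1$. So there is an honest reduced irreducible curve $C \subseteq X$ to work with.

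First I would write down the structure sequence $0 \to \mathcal{O}_X(-C) \to \mathcal{O}_X \to \mathcal{O}_C \to 0$ and twist it by the invertible sheaf $\mathcal{O}_X(F)$ to obtain
$$0 \to \mathcal{O}_X(F - C) \to \mathcal{O}_X(F) \to \mathcal{O}_X(F)|_C \to 0,$$
where the first map is multiplication by a section cutting out $C$. Passing to the long exact sequence in cohomology gives
$$0 \to H^0(X, \mathcal{O}_X(F-C)) \to H^0(X, \mathcal{O}_X(F)) \to H^0(C, \mathcal{O}_X(F)|_C) \to \cdots.$$

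Next I would identify the restriction term. Since $C \cong \mathbb{P}^1$, the line bundle $\mathcal{O}_X(F)|_C$ has degree equal to the intersection number $[F]\cdot[C]$, so $\mathcal{O}_X(F)|_C \cong \mathcal{O}_{\mathbb{P}^1}([F]\cdot[C])$. By hypothesis $[F]\cdot[C] < 0$, and a line bundle of negative degree on $\mathbb{P}^1$ has no nonzero global sections, so $H^0(C, \mathcal{O}_X(F)|_C) = 0$. Feeding this vanishing back into the exact sequence forces the inclusion $H^0(X, \mathcal{O}_X(F-C)) \hookrightarrow H^0(X, \mathcal{O}_X(F))$ to be an isomorphism, hence $h^0(X, F-C) = h^0(X, F)$.

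I do not expect a genuine obstacle here; the argument is a one-line cohomology computation once the setup is in place. The only points that need care are that the exceptional class really is represented by an irreducible curve isomorphic to $\mathbb{P}^1$ — so that we know both its cohomology and the degree of a restricted line bundle — and that the twist behaves as claimed; both are standard and already implicit in the conventions recalled before the lemma.
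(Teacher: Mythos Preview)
Your argument is correct and matches the paper's proof essentially line for line: both reduce to the exceptional curve $C \cong \mathbb{P}^1$, tensor the structure sequence $0 \to \mathcal{O}_X(-C) \to \mathcal{O}_X \to \mathcal{O}_C \to 0$ by $\mathcal{O}_X(F)$, identify the restriction as $\mathcal{O}_{\mathbb{P}^1}([F]\cdot[C])$, and read off the conclusion from the vanishing of $H^0$ in negative degree.
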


\begin{proof}
Note that it suffices to prove this statement for the case where $C$ is a smooth curve isomorphic to $\mathbb{P}^1$:  if $[C']$ is an exceptional class then there exists a smooth curve $C$ isomorphic to $\mathbb{P}^1$ such that $[C'] = [C]$ so $[C'] \cdot [F] = [C] \cdot [F]$ and $h^0(X, F-C') = h^0(X, F-C)$. Note that we have an exact sequence 
$$ \mathcal{O}_X(F-C) \rightarrow \mathcal{O}_X(F) \rightarrow \mathcal{O}_C(F) \cong \mathcal{O}_{\mathbb{P}^1}([F] \cdot [C]) \rightarrow 0$$
induced by tensoring the exact sequence 
$$0 \rightarrow \mathcal{O}_X(-C) \rightarrow \mathcal{O}_X \rightarrow \mathcal{O}_C \rightarrow 0$$
with $\mathcal{O}_X(F)$.  Then, from the long exact sequence of cohomology, $h^0(X, \mathcal{O}_X(F-C)) = h^0(X, \mathcal{O}_X(F))$ since $h^0(X, \mathcal{O}_{\mathbb{P}^1}([F]\cdot[C])) = 0$ ($[F]\cdot[C] <0$).
\end{proof}

The method for finding such the $H$ described above is as follows.

\begin{procedure}
\label{proc:findH}
Given a divisor $F$ we can find a divisor $H$ with $h^0(X, F) = h^0(X, H)$ satisfying either condition (a) or (b) above as follows.
\begin{enumerate}
\item Reduce to the case where $[F] \cdot e_i \geq 0$ for all $i=1, \dots, n$:  if $[F]\cdot e_i <0$ for some $i$, $h^0(X, F) = h^0(X, F-([F]\cdot e_i)E_i)$, so we can replace $F$ with $F - ([F]\cdot e_i) E_i$.
\item Since $L$ is numerically effective, if $[F]\cdot e_0<0$ then $[F]$ is not the class of an effective divisor and we can take $H=F$ (case (b)).
\item If $[F] \cdot [C] \geq 0$ for every exceptional class $[C]$ then, by Lemma \ref{lem:IdentifyingNEFF}, $F$ is numerically effective, so we can take $H = F$ (case (a)).
\item If $[F] \cdot [C] <0$ for some exceptional class $[C]$ then $h^0(X, F) = h^0(X, F-C)$ by Lemma \ref{lem:Equalh0}. Then replace $F$ with $F-C$ and repeat from Step 2.  
\end{enumerate}
\end{procedure}

There are only a finite number of exceptional classes to check by Lemma \ref{lem:exceptionalcurves} so it is possible to complete Step 3.  Further, it is easy to see with Lemma \ref{lem:exceptionalcurves} that $F \cdot e_0 > [F-C] \cdot e_0$ when $[C]$ is an exceptional curve, so the condition in Step 2 will be satisfied after at most $[F]\cdot e_0 +1$ repetitions.  Thus, this process will eventually terminate.\footnote{The decomposition $F = H+(F-H)$ has been referred to as a \textit{Zariski decomposition} in some of the literature on fat points (for example, in \cite{FHH01}), but we avoid this terminology here because it is not consistent with definitions elsewhere (for example, in \cite{Lazarsfeld04}).}

\subsection{Proof of Theorem \ref{thm:mainthm}(b)}
\label{sec:678Proof}


The proof of each part of Theorem \ref{thm:mainthm}(b) follows the same five steps outlined below. In Step 4, we will use the following lemma.

\begin{lem}
\label{lem:moninz}
Let $I$ be the ideal of $r$ points in $\mathbb{P}^2$.  The number of monomials in $\text{gin}(I^{(m)}) \subseteq K[x,y,z]$ of degree $t$ involving the variable $z$ is equal to $H_{I}(t-1)$.
\end{lem}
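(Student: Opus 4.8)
The plan is to establish a bijection, implemented by multiplication by $z$, between the set of monomials of degree $t-1$ lying in $\text{gin}(I^{(m)})$ and the set of monomials of degree $t$ in $\text{gin}(I^{(m)})$ that are divisible by $z$. Once this is in place, the desired count is simply $\dim_K \text{gin}(I^{(m)})_{t-1} = H_{\text{gin}(I^{(m)})}(t-1)$, which equals $H_{I^{(m)}}(t-1)$ by Theorem~\ref{thm:commonproperties}.

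The only structural input I would use is that, since $I^{(m)}$ is saturated, no minimal generator of the monomial ideal $\text{gin}(I^{(m)})$ involves $z$ (Proposition~\ref{prop:geninxy}, or equivalently the explicit description in Corollary~\ref{cor:formofgin}). From this I would extract the key observation: for any monomial $M \in K[x,y,z]$,
$$zM \in \text{gin}(I^{(m)}) \iff M \in \text{gin}(I^{(m)}),$$
because a monomial lies in $\text{gin}(I^{(m)})$ precisely when it is divisible by one of the generators $x^{i}y^{j}$, and divisibility by such a generator is unaffected by multiplying or dividing by a power of $z$.

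With this observation the bijection is immediate. The map $M \mapsto zM$ sends degree-$(t-1)$ monomials of $\text{gin}(I^{(m)})$ to degree-$t$ monomials of $\text{gin}(I^{(m)})$ that are divisible by $z$; it is clearly injective, and it is surjective because any degree-$t$ monomial $N$ of $\text{gin}(I^{(m)})$ divisible by $z$ can be written $N = z(N/z)$ with $N/z$ of degree $t-1$ and, by the observation above, $N/z \in \text{gin}(I^{(m)})$. Hence the number of degree-$t$ monomials of $\text{gin}(I^{(m)})$ involving $z$ equals the number of degree-$(t-1)$ monomials of $\text{gin}(I^{(m)})$, namely $H_{\text{gin}(I^{(m)})}(t-1) = H_{I^{(m)}}(t-1)$ by Theorem~\ref{thm:commonproperties}, which is the asserted value.

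There is no real obstacle in this argument: everything reduces to the fact, already available from Proposition~\ref{prop:geninxy}, that $\text{gin}(I^{(m)})$ is generated by monomials in $x$ and $y$ alone. The only step that genuinely uses this is the surjectivity of multiplication by $z$ --- equivalently, that dividing a monomial of $\text{gin}(I^{(m)})$ by $z$ leaves it in the ideal --- and this is precisely the property that can fail for a monomial ideal not generated in $x$ and $y$.
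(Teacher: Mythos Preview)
Your argument is correct and is essentially the same as the paper's: both use Proposition~\ref{prop:geninxy} to note that $\text{gin}(I^{(m)})$ is generated in $x$ and $y$, then observe that multiplication by $z$ gives a bijection between $\text{gin}(I^{(m)})_{t-1}$ and the monomials of $\text{gin}(I^{(m)})_t$ divisible by $z$. Your write-up is simply more explicit about surjectivity; note also that both your proof and the paper's proof yield $H_{I^{(m)}}(t-1)$, so the ``$H_I(t-1)$'' in the lemma's statement is evidently a typo for $H_{I^{(m)}}(t-1)$.
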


\begin{proof}
Since, by Proposition \ref{prop:geninxy}, $\text{gin}(I^{(m)})$ is generated in the variables $x$ and $y$, the only elements of $\text{gin}(I^{(m)})_t$ that involve $z$ have to arise by multiplying monomials of $\text{gin}(I^{(m)})_{t-1}$ by $z$.   Since multiplying each of the $H_{I^{(m)}}(t-1)$ monomials in $\text{gin}(I^{(m)})_{t-1}$ by $z$ gives distinct monomials, the result follows.
\end{proof}

As in Section \ref{sec:678Background}, $Z_m = m(p_1+\cdots +p_r)$ is a uniform fat point subscheme supported at $r$ distinct general points $p_1, \dots, p_r$ and $I$ is the ideal of $p_1, \dots, p_r$ so that $I^{(m)} = I_{Z_m}$.  Recall that if $F_t = tL - m(E_1+\cdots +E_r)$ then
$$H_{I_{Z_m}}(t) = h^0(X, \mathcal{F}_t);$$
we also write $H_{I_{Z_m}}(t) = H_Z(t)$.
Finally, 
$$\alpha(m) := \min \{ t : H_{I_{Z_m}}(t) \neq 0\}.$$

\textbf{Step 1:}  Find the smallest $N$ such that ${F}_t = tE_0-m(E_1+\cdots+E_r)$ is numerically effective for all $t \geq N$.  To do this we will find the smallest $N$ such that  $[F_t] \cdot [C] \geq 0$ for all $t \geq N$ (see Lemma \ref{lem:IdentifyingNEFF}).  By Corollary \ref{cor:h0ofFtNEFF},
 $$h^0(X, \mathcal{F}_t) = {t+2 \choose 2}-r{m+1 \choose 2}$$
 for all $t \geq N$.

\textbf{Step 2:}  Use some optimal numerically effective divisor $D$ to find $M$ such that $[{F}_t]\cdot [D] <0$ for all $t<M$.  By the definition of a numerically effective divisor, this will show that $[F_t]$ for $t<M$ is not the class of an effective divisor, and thus that $H_{I_{Z_m}}(t) = h^0(X, \mathcal{F}_t) = 0$ for all $t<M$.

\textbf{Step 3:}  Show that $h^0(X, \mathcal{F}_M) \neq 0$ where $M$ is as in Step 2.  To do this, we will
use Procedure \ref{proc:findH} to find a numerically effective $H$ such that $h^0(X, F_M) = h^0(X, H_M)$.  Together with Step 2, this will show that $\alpha(m) = M$, and $x^M$ is the smallest power of $x$ in $\text{gin}(I^{(m)})$.

\textbf{Step 4:}  By Lemma \ref{lem:moninz}, the number of monomials of degree $t$ in the $\text{gin}(I_{Z_m})$ involving only the variables $x$ and $y$ is equal to $H_Z(t) -H_Z(t-1)$.  Using this, show that the number of monomials in $\text{gin}(I_{Z_m})$ of degree $N+1$ in $x$ and $y$ is exactly $N+2$ (here $N$ is as in Step 1).  This implies that all monomials in $x$ and $y$ of degree $N+1$ are in the $\text{gin}(I_{Z_m})$.  

Use Lemma \ref{lem:moninz} again to show that the number of monomials in $\text{gin}(I_{Z_m})$ of degree $N$ involving only $x$ and $y$ is strictly less than $N+1$, so not all monomials of degree $N$ in $x$ and $y$ are in $\text{gin}(I^{(m)})$.  Since the ideals of the symbolic generic initial system are generated in $x$ and $y$ (Proposition \ref{prop:geninxy}), this will imply that $y^{N+1}$ is the smallest power of $y$ in $\text{gin}(I^{(m)})$.

\textbf{Step 5:}  The smallest power of $y$ in $\text{gin}(I^{(m)})$ is $N+1$ by Step 4 and the smallest power of $x$ in $\text{gin}(I^{(m)})$ is $\alpha(m)=M$ by Step 3.
Thus, the intercepts of the limiting shape $P$ of the symbolic generic initial system of $I$ are $\big( 0, \lim_{m \rightarrow \infty} \frac{N+1}{m} \big)$ and $\big( \lim_{m \rightarrow \infty} \frac{M}{m}, 0 \big)$.
Since
$$ \Big( \lim_{m \rightarrow \infty} \frac{N+1}{m}\Big) \cdot \Big( \lim_{m \rightarrow \infty} \frac{M}{m} \Big) = r,$$
Corollary \ref{cor:InterceptsDetermineShape} implies that the limiting shape $P$ is as claimed in Theorem \ref{thm:mainthm}(b).

\subsubsection{6 General Points}

Throughout this section $I$ is the ideal of 6 points $p_1, \dots, p_6$ of $\mathbb{P}^2$ in general position and $Z_m = m(p_1 + \cdots + p_6)$ so $I_{Z_m} = I^{(m)}$.  The exceptional classes of the blow-up $X$ of $\mathbb{P}^2$ at $p_1, \dots, p_6$ are those in Lemma \ref{lem:exceptionalcurves} with two of the $e_i$ set to 0. 

\textbf{Step 1:} To find an $N$ such that $F_t = tL-m(E_1+\cdots+E_6)$ is numerically effective for all $t \geq N$ we will use the permutation of the exceptional curves from Lemma \ref{lem:exceptionalcurves} that is most likely to make $h_i \cdot [F_t]$ negative.  

\begin{eqnarray*}
F_t \cdot h_2= t - 2m \geq 0 &\iff& t \geq 2m\\
F_t\cdot h_3 = 2t - 5m \geq 0 &\iff& t \geq \frac{5}{2}m\\
F_t\cdot h_4 = 3t - 2m -5m\geq 0 &\iff& t \geq \frac{7}{3}m\\
F_t\cdot h_5 = 4t - 3\cdot 2m - 3m \geq 0 &\iff& t \geq \frac{9}{4}m\\
F_t \cdot h_6 = 5t - 2\cdot 6m \geq 0 &\iff& t \geq \frac{12}{5}m\\
F_t\cdot h_7 = 6t - 3m - 2\cdot 5 m \geq 0 &\iff& t \geq \frac{13}{6}m\\
\end{eqnarray*}

The strongest condition on $t$ is $t \geq \frac{5}{2}m$.  Thus, $N = \frac{5}{2}m$ and $F_t$ is numerically effective for all $t \geq \frac{5}{2}m$.  Thus, 
$$H_{I^{(m)}}(t) = h^0(X, \mathcal{F}_t) = {t+2 \choose 2}-6{m+1 \choose 2}$$
for all $t \geq \frac{5}{2}m$.

\textbf{Step 2:}  Now we want to find an optimal numerically effective divisor $D$ such that $[{F}_t] \cdot [D]<0$ for small $t$.  By the calculations in Step 1,
$$D = 5L-2(E_1+\cdots +E_6)$$
is numerically effective ($D = F_5$ when $m=2$).

If $[F_t]$ is the class of an effective divisor then $[D] \cdot[{F}_t]  \geq 0$.  Thus, if $[D] \cdot [{F}_t] <0$ then $[F_t]$ is not effective.  Note that 
$$[D] \cdot [{F}_t] = 5t-2\cdot6m <0 \iff t< \frac{12m}{5}.$$
Thus, $[{F}_t]$ is not the class of an effective divisor when $t< \frac{12m}{5}$ so $h^0(X, \mathcal{F}_t) = 0$ for $ t< \frac{12}{5}m$.  We set $M = \frac{12}{5}m$.

\textbf{Step 3:}  Starting with this step we will make a divisibility assumption on $m$. Suppose that $m$ is divisible by both 5 and 2, so 
$$m = 10m'$$
for some integer $m'$.  The goal of this step is to show that $F_{\frac{12}{5}m} = F_{24m'}$ is in the class of an effective divisor; to do this we follow Procedure \ref{proc:findH}.  One can check that the only exceptional class that has a negative intersection multiplicity with $[F_{24m'}]$ is $h_3=[2L-E_1- \cdots -E_5]$:
$$[{F}_{24m'}] \cdot h_3 = 2 \cdot 24 m' - 5\cdot 10m' = -2m'.$$

At this point it will be useful to distinguish between the permutations of  $h_3$; we will denote $[2L-E_1 - \cdots -\widehat{E_i} - \cdots - E_6]$ by $h_{3_i}$.

\begin{lem}
\label{lem:ifonethenall6}
Let $F_t = tL-m(E_1+\cdots +E_6)$.  If $[{F}_t] \cdot h_{3_1} <0$ then $([{F}_t] - h_{3_1} - \cdots - h_{3_i}) \cdot h_{3_{i+1}} <0$ for $i=1, \dots, 5$.  
\end{lem}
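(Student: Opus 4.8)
The plan is to reduce the lemma to two short computations in $\text{Cl}(X)$, using only the intersection rules $e_0^2 = 1$, $e_i^2 = -1$, and $e_i \cdot e_j = 0$ for $i \neq j$. Throughout I write $[F_t] = te_0 - m(e_1 + \cdots + e_6)$ and $h_{3_i} = 2e_0 - \sum_{l \neq i} e_l$, the sum running over $l \in \{1, \dots, 6\} \setminus \{i\}$.

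First I would establish that the six classes $h_{3_1}, \dots, h_{3_6}$ are pairwise orthogonal, i.e.\ $h_{3_j} \cdot h_{3_k} = 0$ for $j \neq k$. Expanding, the $e_0$-part contributes $2 \cdot 2 = 4$, while the only surviving $e_l^2$-terms are those with $l \notin \{j, k\}$ --- exactly four of them, each contributing $-1$ --- so the product is $4 - 4 = 0$. (The same bookkeeping gives $h_{3_j}^2 = 4 - 5 = -1$, as it must for an exceptional class.) This orthogonality is the crux of the argument, and it is the single place where the hypothesis ``$r=6$'' enters: with a different number of points the $e_0^2$-contribution would no longer exactly cancel the $e_l^2$-contributions.

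Next I would observe that, because $[F_t]$ is symmetric under permutations of the indices $1, \dots, 6$, the intersection $[F_t] \cdot h_{3_j}$ is independent of $j$; a one-line computation gives $[F_t] \cdot h_{3_j} = 2t - 5m$ for every $j$ (this is the same number recorded as $F_t \cdot h_3$ in Step~1 of the $6$-points case of the proof of Theorem~\ref{thm:mainthm}(b)). Combining the two computations, for each $i$ with $1 \le i \le 5$ the terms $h_{3_j} \cdot h_{3_{i+1}}$ with $j \le i$ all vanish by orthogonality, so
\[
\big([F_t] - h_{3_1} - \cdots - h_{3_i}\big) \cdot h_{3_{i+1}} = [F_t] \cdot h_{3_{i+1}} = 2t - 5m = [F_t] \cdot h_{3_1},
\]
which is negative by hypothesis. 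I do not anticipate any genuine obstacle: once the orthogonality in the first step is verified, the rest is linearity of the intersection pairing together with the permutation symmetry of $[F_t]$.
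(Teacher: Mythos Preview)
Your argument is correct. The key observation that the classes $h_{3_1},\dots,h_{3_6}$ are pairwise orthogonal is valid: for $j\neq k$ one has $h_{3_j}\cdot h_{3_k}=2\cdot 2-\#\{l:l\neq j,\ l\neq k\}=4-4=0$, and combined with the symmetry $[F_t]\cdot h_{3_j}=2t-5m$ for every $j$ this immediately gives $([F_t]-h_{3_1}-\cdots-h_{3_i})\cdot h_{3_{i+1}}=2t-5m<0$.

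The paper reaches the same conclusion $2t-5m$ by a slightly different route: it first writes out the class $[F_t]-h_{3_1}-\cdots-h_{3_i}$ explicitly as $(t-2i)e_0-(m-(i-1))(e_1+\cdots+e_i)-(m-i)(e_{i+1}+\cdots+e_6)$ and then intersects this directly with $h_{3_{i+1}}$. Your orthogonality argument is a cleaner organization of the same arithmetic: it isolates exactly why the answer is independent of $i$ (namely, because the cross-terms $h_{3_j}\cdot h_{3_{i+1}}$ vanish) rather than having this emerge from a cancellation at the end. The paper's version has the mild advantage of recording the intermediate class explicitly, which is used elsewhere in the procedure; yours has the advantage of making transparent why the $r=6$ case is special.
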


\begin{proof}
Suppose that $[{F}_t] \cdot h_{3_1}<0$, or equivalently, that $t< \frac{5}{2}m$.  Then 
$$[{F}_t] - h_{3_1} - \cdots - h_{3_i}= (t-2i)e_0 - (m-(i-1))(e_1+\cdots +e_i) - (m-i)(e_{i+1} + \cdots + e_6)$$
so 
$$([{F}_t] - h_{3_1} - \cdots - h_{3_i})\cdot h_{3_{i+1}} = 2(t-2i) - (m-i+1)(i) - (m-i)(6-i-1) = 2t - 5m$$
which is less than 0 since $t< \frac{5}{2}m$.
\end{proof}

We will denote the sum $h_{3_1} + \cdots +h_{3_5}$ by $[Y]$ and call it a \textit{cycle}.   Note that 
$$[Y]=(2\cdot 6)e_0 - 5(e_1+\cdots +e_6).$$
By Lemma \ref{lem:ifonethenall6}, if $[{F}_t] \cdot h_3 <0$ for one permutation then we subtract an entire cycle from $[F_t]$ when following Procedure \ref{proc:findH}.  

When following Procedure \ref{proc:findH}, we subtract off $2m'$ full cycles from $[{F}_{24m'}]$;
\begin{eqnarray*}
[{F}_{24m'}] - 2m'[Y] &=& (24m' -12 \cdot 2m')e_0 - (10m'-5\cdot 2m')(e_1+\cdots +e_6) \\
&=& 0e_0-0(e_1+\cdots +e_6)
\end{eqnarray*}
so $H_{24m'} = 0$.
Therefore, $H_{I^{(m)}}(24m') = h^0(X, \mathcal{F}_{24m'}) = h^0(X, 0) = 1$ and $\alpha(m) = \frac{12m}{5}$ when $m$ is divisible by 10.

\textbf{Step 4:}  Again, in this section we will assume that 10 divides $m$ and we write $m = 10m'$ for some integer $m'$.  Then $N=\frac{5}{2}m = 25m'$.  By Lemma \ref{lem:moninz}, there are $H_Z(N+1) - H_Z(N)$ monomials in $\text{gin}(I^{(m)})$ that involve only $x$ and $y$. Since $F_N$ and $F_{N+1}$ are numerically effective by Step 1,
\begin{eqnarray*}
H_Z(N+1) - H_Z(N) &=& h^0(X,\mathcal{F}_{N+1}) - h^0(X,\mathcal{F}_{N})\\
&=& {N+2 \choose 1} = N+2.
\end{eqnarray*}
Thus, $\text{gin}(I_{Z_m})_{N+1}$ contains all monomials of degree $N+1$ in the variables $x$ and $y$.

Now we need to determine $H_Z(N) - H_Z(N-1)$ and show that it is less than $N+1$ (that is, $\text{gin}(I_{Z_m})_N$ does not contain all monomials in $x$ and $y$ of degree $N$).  Consider 
$$F_{N-1} = (25m'-1)L - 10m'(E_1+\cdots +E_6).$$
Then, following Procedure \ref{proc:findH}, we can subtract exactly 2 cycles $[Y]$ from $[{F}_{N-1}]$ to obtain $[H_{N-1}]$.  We get
$$[{H}_{N-1}] = (25m'-1-24)e_0 - (10m'-10)(e_1+ \cdots + e_6)$$
so, by Corollary \ref{cor:h0ofFtNEFF},
\begin{eqnarray*}
h^0(X, \mathcal{F}_{N-1}) &=& h^0(X, \mathcal{H}_{N-1})\\
&=& \frac{25}{2}m'^2-\frac{35}{2}m'+6.
\end{eqnarray*}

By Step 1, $F_N$ is numerically effective so, again by Corollary \ref{cor:h0ofFtNEFF},
$$h^0(X, \mathcal{F}_N) = \frac{25}{2}m'^2+\frac{15}{2}m'+1$$

Thus, 
$$H_{I_{Z_m}}(N)-H_{I_{Z_m}}(N-1) = 25m'-5<N+1 = 25m'+1$$
and not all monomials in $x$ and $y$ of degree $N$ are contained in $\text{gin}(I^{(m)})$.  Therefore, the largest degree generator of $\text{gin}(I_{Z_m})$ is of degree $N+1 = \frac{5}{2}m+1$ when $m$ is divisible by 10.

\textbf{Step 5:}  By Step 4, the highest degree generator of $\text{gin}(I^{(m)})$ is of degree $\frac{5}{2}m+1$ when $m$ is divisible by 10.  By Step 3, the smallest degree element of $\text{gin}(I^{(m)})$ is of degree $\alpha(m) = \frac{12m}{5}$ when $m$ is divisible by 10.  Thus, the intercepts of the limiting shape of the symbolic generic initial system of $I$ are $(0, \frac{5}{2})$ and $(\frac{12}{5},0)$.  Since 
$$\frac{12}{5} \cdot \frac{5}{2} = 6,$$
Corollary \ref{cor:InterceptsDetermineShape} tells us that the boundary of the limiting shape is defined by the line through the intercepts and is as claimed in Theorem \ref{thm:mainthm}(b).

\subsubsection{7 General Points}

Throughout this section $I$ is the ideal of 7 points $p_1, \dots, p_7$ of $\mathbb{P}^2$ in general position and $Z = m(p_1 + \cdots + p_7)$ so $I_{Z_m} = I^{(m)}$.  The exceptional classes of the blow-up $X$ of $\mathbb{P}^2$ at $p_1, \dots, p_7$ are those in Lemma \ref{lem:exceptionalcurves} with one of the $e_i$ set to 0. 

\textbf{Step 1:} To find an $N$ such that $F_t = tL-m(E_1+\cdots+E_7)$ is numerically effective for all $t \geq N$ we will use the permutation of the exceptional curves from Lemma \ref{lem:exceptionalcurves} that is most likely to make $h_i \cdot [F_t]$ negative.  Similar to the case of six points, the strongest condition on $t$ from $h_i \cdot [F_t] \geq 0$  is $t \geq \frac{8}{3}m$.  Thus, $N = \frac{8}{3}m$ and $F_t$ is numerically effective for all $t \geq N$.  Further, 
$$H_{I^{(m)}}(t) = h^0(X, F_t) = {t+2 \choose 2}-7{m+1 \choose 2}$$
for all $t \geq \frac{8}{3}m$.

\textbf{Step 2:}  Now we want to find an optimal numerically effective divisor $D$.  By the calculations in Step 1,
$$D = 8L-3(E_1+\cdots +E_7)$$
is numerically effective ($D = F_8$ when $m=3$).

If $[F_t]$ is the class of an effective divisor then $[D] \cdot [{F}_t]  \geq 0$.  We want to know when $[D] \cdot [{F}_t]$ is strictly less than 0 because this will imply that $[F_t]$ is not the class of an effective divisor.  Note that 
$$[D] \cdot [{F}_t] = 8t-3\cdot7m <0 \iff t< \frac{21m}{8}.$$
Thus, $h^0(X, \mathcal{F}_t) = 0$ for $ t< \frac{21}{8}m$.  We set $M = \frac{21}{8}m$.

Our next goal is to show that this is an optimal value.  That is, if $\frac{21}{8}m$ is an integer, then $h^0(X, \mathcal{F}_{\frac{21}{8}m}) \neq 0$.

\textbf{Step 3:}  Starting with this step we will make a divisibility assumption on $m$ and suppose that $m$ is divisible by both 8 and 3, so 
$$m = 24m'$$
for some integer $m'$.  The goal of this step is to show that $F_{\frac{21}{8}m} = F_{21\cdot 3m'}$ is in the class of an effective divisor; to do this we will follow Procedure \ref{proc:findH}.  One can check  that the only exceptional class which has a negative intersection multiplicity with $[F_{63m'}]$ is $ h_4=[3L-2E_1- E_2-\cdots -E_7]$:
$$[F_{21\cdot 3m'}] \cdot h_4 = 3 \cdot 63 m' -2\cdot 24m'-6\cdot 24m' = -3m'.$$

At this point it will be useful to distinguish between the permutations of  $h_4$;  we will denote $[3L-2E_i - E_1 - \cdots -\widehat{E_i} - \cdots - E_7]$ by $h_{4_i}$.

\begin{lem}
\label{lem:ifonethenall7}
Let $F_t = tL-m(E_1+\cdots +E_7)$.  If $[{F}_t] \cdot h_{4_1} <0$ then $([{F}_t] - h_{4_1} - \cdots - h_{4_i}) \cdot h_{4_{i+1}} <0$ for $i=1, \dots, 6$.  
\end{lem}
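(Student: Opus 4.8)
The plan is to mirror exactly the structure of Lemma \ref{lem:ifonethenall6}, the analogous statement for six points, replacing the exceptional class $h_3$ by $h_4$ and working with seven points instead of six. First I would translate the hypothesis $[F_t]\cdot h_{4_1}<0$ into an inequality on $t$ and $m$: since $h_{4_1}=3e_0-2e_1-e_2-\cdots-e_7$ and $F_t=te_0-m(e_1+\cdots+e_7)$, one computes $[F_t]\cdot h_{4_1}=3t-2m-6m=3t-8m$, so the hypothesis is equivalent to $t<\frac{8}{3}m$ (which is consistent with the value $N=\frac{8}{3}m$ found in Step 1).

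Next I would write down, by induction on $i$, an explicit formula for the class $[F_t]-h_{4_1}-\cdots-h_{4_i}$. Each $h_{4_j}$ subtracts $3$ from the $e_0$-coefficient, subtracts $2$ from the coefficient of $e_j$, and subtracts $1$ from the coefficient of each of the other $e_k$ with $k\in\{1,\dots,7\}\setminus\{j\}$. After subtracting the first $i$ permutations, the $e_0$-coefficient is $t-3i$; for an index $k\le i$, its coefficient has been decreased once by $2$ (when $j=k$) and $i-1$ times by $1$, for a total decrease of $2+(i-1)=i+1$, giving coefficient $-(m-i-1)$... more precisely the multiplicity becomes $m-i+1$ subtracted — I should be careful with signs here, but the bookkeeping is routine: indices $k\le i$ get multiplicity $m-(i+1-1)$ shifted appropriately and indices $k>i$ get multiplicity decreased $i$ times by $1$, hence $m-i$. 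So the class is
$$[F_t]-h_{4_1}-\cdots-h_{4_i}=(t-3i)e_0-(m-i-1)(e_1+\cdots+e_i)-(m-i)(e_{i+1}+\cdots+e_7),$$
matching the pattern of Lemma \ref{lem:ifonethenall6} (with the $2e_0$ there replaced by $3e_0$ and six indices replaced by seven).

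Then I would intersect this class with $h_{4_{i+1}}=3e_0-2e_{i+1}-e_1-\cdots-\widehat{e_{i+1}}-\cdots-e_7$ and simplify. The $e_0$-term contributes $3(t-3i)$; the coefficient of $e_{i+1}$ in our class is $-(m-i)$ (since $i+1>i$) and it meets the $-2e_{i+1}$ of $h_{4_{i+1}}$ with sign $(-1)(-1)(-2)$... again the signs are routine because $e_k^2=-1$. Collecting all terms, the $i$-dependence should cancel and leave $3t-8m$, which is negative precisely under the standing hypothesis $t<\frac{8}{3}m$. This gives the conclusion for all $i=1,\dots,6$. The only mildly delicate point — and the place I expect to have to be most careful — is the index bookkeeping in the intersection product: distinguishing the index $i+1$ (which has multiplicity $m-i$, not $m-i-1$, in the partially reduced class, and meets the doubled generator $-2e_{i+1}$) from the indices $1,\dots,i$ (multiplicity $m-i-1$, meeting a single $-e_k$) and from the indices $i+2,\dots,7$ (multiplicity $m-i$, meeting a single $-e_k$). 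Once these three groups are tallied correctly, the cancellation of $i$ is automatic, exactly as in the six-point case, so there is no real obstacle beyond this arithmetic.
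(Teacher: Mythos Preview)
Your approach is correct and is exactly the one the paper intends: the paper omits the proof of Lemma~\ref{lem:ifonethenall7} because it is the verbatim analogue of Lemma~\ref{lem:ifonethenall6}, and your plan mirrors that proof step for step. Your explicit formula $(t-3i)e_0-(m-i-1)(e_1+\cdots+e_i)-(m-i)(e_{i+1}+\cdots+e_7)$ is right, and intersecting with $h_{4_{i+1}}$ indeed collapses to $3t-8m$ with all $i$-dependence cancelling; the only thing to clean up in a final write-up is the hedging around the sign bookkeeping, but the arithmetic you describe is correct.
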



We will denote the sum of all seven permutations of $h_4$ by $[Y]$ and call it one \textit{cycle}.   Note that 
$$[Y]=21e_0 - 8(e_1+\cdots +e_7).$$

It is easy to see that we can subtract off $3m'$ full cycles from $[{F}_{21\cdot3m'}]$ to obtain $[H_{21\cdot3m'}]$.  We have
\begin{eqnarray*}
[{F}_{21\cdot 3m'}] - 3m'[Y] &=& (21\cdot 3m' -21 \cdot 3m')e_0 - (24m'-8\cdot 3m')(e_1+\cdots +e_7)\\
 &=& 0L-0(e_1+\cdots +e_7).
\end{eqnarray*}
so $H_{21\cdot3m'} = 0$ and $h_{I_{Z_m}}(21\cdot 3m') = h^0(X, F_{21\cdot 3m'}) = h^0(X, 0) = 1$ and $\alpha(m) = 21\cdot 3m' = \frac{21m}{8}$ when $m$ is divisible by $8\cdot3$.

\textbf{Step 4:}  Again, in this section we will assume that 24 divides $m$, so we write $m = 24m'$ for some integer $m'$.  Then $N= \frac{8}{3}m = 8^2m'$.
We now want to show that the number of monomials in the variables $x$ and $y$ in $\text{gin}(I^{(m)})_{N+1}$ is equal to $N+2$ and that the number of monomials of degree $N$ in $x$ and $y$ in $\text{gin}(I^{(m)})_N$ is less than $N+1$.  This will prove that the highest degree generator occurs in degree $N+1$.

By Lemma \ref{lem:moninz}, there are $H_Z(N+1) - H_Z(N)$ monomials in $\text{gin}(I^{(m)})$ that involve only $x$ and $y$.  Then, since $F_N$ and $F_{N+1}$ are numerically effective by Step 1,
$$H_Z(N+1) - H_Z(N) = {N+2 \choose 1} = N+2.$$
Thus, $\text{gin}(I_{Z_m})_{N+1}$ contains all monomials of degrees $N+1$ in the variables $x$ and $y$.

Now we need to determine $H_Z(N) - H_Z(N-1)$ and show that it is less than $N+1$ (that is, $\text{gin}(I_{Z_m})_N$ does not contain all monomials in $x$ and $y$ of degree $N$).  

Consider 
$$F_{N-1} = (8^2m'-1)L - 24m'(E_1+\cdots +E_7).$$
Recall from Step 3 that one cycle is equal to $[Y]=3\cdot 7e_0 - 8(e_1+\cdots +e_7).$  By Procedure \ref{proc:findH},
$$[{H}_{N-1}] = (8^2m'-1-63)e_0 - (24m'-24)(e_1+ \cdots + e_7)$$
so, by Corollary \ref{cor:h0ofFtNEFF},
$$h^0(X, \mathcal{F}_{N-1}) =  h^0(X, \mathcal{H}_{N-1}) = 32m'^2-52m'+21.$$

From Step 1 we know that ${F}_N$ is numerically effective and
$$h^0(X, \mathcal{F}_N)  = 32m'^2+12m'+1$$

Thus, 
$$H_{I_{Z_m}}(N)-H_{I_{Z_m}}(N-1) = 64m'-20<N+1 = 64m'+1$$
and not all monomials in $x$ and $y$ of degree $N$ are contained in $\text{gin}(I^{(m)})$.  Therefore, the largest degree generator of $\text{gin}(I_{Z_m})$ is of degree $N+1 = \frac{8}{3}m+1$ when $m$ is divisible by 24.

\textbf{Step 5:}  By Step 4, the highest degree generator of $\text{gin}(I^{(m)})$ is of degree $\frac{8}{3}m+1$ when $m$ is divisible by 24.  By Step 3, the smallest degree element of $\text{gin}(I^{(m)})$ is of degree $\alpha(m) = \frac{21m}{8}$ when $m$ is divisible by 24.  Thus, the intercepts of the limiting shape of the symbolic generic initial system of $I$ are $(0, \frac{8}{3})$ and $(\frac{21}{8},0)$.  Since
$$\frac{8}{3} \cdot \frac{21}{8} = 7,$$
Corollary \ref{cor:InterceptsDetermineShape} tells us that the limiting polytope is defined by the line through the intercepts and is as claimed in Theorem \ref{thm:polytopefor678}.

\subsubsection{8 General Points}

Throughout this section $I$ is the ideal of 8 points $p_1, \dots, p_8$ of $\mathbb{P}^2$ in general position and $Z_m = m(p_1 + \cdots + p_8)$ so $I_{Z_m} = I^{(m)}$.  The exceptional classes of the blow-up $X$ of $\mathbb{P}^2$ at $p_1, \dots, p_8$ are those in Lemma \ref{lem:exceptionalcurves}.

\textbf{Step 1:} To find an $N$ such that $F_t = tL-m(E_1+\cdots+E_8)$ is numerically effective for all $t \geq N$ we compute $[C] \cdot [{F}_t]$ for all exceptional classes $[C]$ of $X$.  Similar to the case of six points, we can check that the strongest condition on $t$ in resulting from $[C] \cdot [F_t] \geq 0$ is $t \geq \frac{17}{6}m$.  Thus, $N = \frac{17}{6}m$ and $F_t$ is numerically effective for all $t \geq N$.  Further, 
$$h^0(X, \mathcal{F}_t) = {t+2 \choose 2}-8{m+1 \choose 2}$$
for all $t \geq \frac{17}{6}m$.

\textbf{Step 2:}  Now we want to find an optimal numerically effective divisor $D$.  By the calculations in Step 1, 
$$D = 17L-6(E_1+\cdots +E_8)$$
is numerically effective ($D = F_{17}$ when $m=6$).

If $[F_t]$ is the class of an effective divisor then $[D] \cdot [{F}_t]  \geq 0$.  We want to know when $[D] \cdot [{F}_t]$ is strictly less than 0 which will imply that $[F_t]$ is not the class of an effective divisor.  Note that 
$$[D] \cdot [{F}_t] = 17t-6\cdot8m <0 \iff t< \frac{48m}{17}.$$
Thus, $h^0(X, \mathcal{F}_t) = 0$ for $ t< \frac{48}{17}m$.  We set $M = \frac{48}{17}m$.

Our next goal is to show that this is an optimal value.  That is, if $\frac{48}{17}m$ is an integer, then $h^0(X, \mathcal{F}_{\frac{48}{17}m}) \neq 0$.

\textbf{Step 3:}  Starting with this step we will make a divisibility assumption on $m$ and suppose that $m$ is divisible by both 17 and 6, so 
$$m = 17 \cdot 6m'$$
for some integer $m'$.  The goal of this step is to show that $F_{\frac{48}{17}m} = F_{48\cdot 6m'}$ is in the class of an effective divisor.  To do this we will follow Procedure \ref{proc:findH} to find $H_{48\cdot 5m'}$.  One can check that the only exceptional class that has a negative intersection multiplicity with $[\mathcal{F}_{48 \cdot 6m'}]$ is $h_7=[6L-3E_1- 2E_2-\cdots -2E_8]$:
$$[{F}_{48\cdot 6m'}] \cdot h_7 = 6 \cdot 48\cdot 6 m' -3\cdot 17\cdot 6m' - 2\cdot 7 \cdot 17\cdot 6 m' = -6m'.$$

It will be useful to distinguish between the permutations of  $h_7$.  We will denote $[6L-3E_i - 2E_1 - \cdots -\widehat{2E_i} - \cdots - 2E_7]$ by $h_{7_i}$.

\begin{lem}
\label{lem:ifonethenall8}
Let $F_t = tL-m(E_1+\cdots +E_8)$.  If $[{F}_t] \cdot h_{7_1} <0$ then $([{F}_t] - h_{7_1}- \cdots -h_{7_i}) \cdot h_{7_{i+1}} <0$ for $i=1, \dots, 7$.  
\end{lem}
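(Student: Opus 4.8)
The plan is to mirror the proofs of Lemmas~\ref{lem:ifonethenall6} and~\ref{lem:ifonethenall7}: I would compute the class $[F_t]-h_{7_1}-\cdots-h_{7_i}$ explicitly, intersect it with the next permutation $h_{7_{i+1}}$, and check that the result is a quantity independent of $i$ that is negative precisely when $[F_t]\cdot h_{7_1}<0$. Then the hypothesis $[F_t]\cdot h_{7_1}<0$ propagates automatically through all the reduction steps.

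Concretely, I would first record that each permutation can be written $h_{7_j}=6e_0-2(e_1+\cdots+e_8)-e_j$, so that subtracting the first $i$ of them from $[F_t]=te_0-m(e_1+\cdots+e_8)$ yields
\[
[F_t]-h_{7_1}-\cdots-h_{7_i}=(t-6i)e_0-(m-2i-1)(e_1+\cdots+e_i)-(m-2i)(e_{i+1}+\cdots+e_8),
\]
the coefficient of each of $e_1,\dots,e_i$ being raised by one because each is the distinguished index of exactly one of the subtracted classes, while $e_{i+1},\dots,e_8$ retain coefficient $-(m-2i)$. Intersecting this class with $h_{7_{i+1}}=6e_0-2(e_1+\cdots+e_8)-e_{i+1}$ via $e_0^2=1$, $e_j^2=-1$, $e_j\cdot e_k=0$ for $j\neq k$, and separating the contributions of $e_0$, of $e_1,\dots,e_i$, of the distinguished index $e_{i+1}$, and of $e_{i+2},\dots,e_8$, gives
\[
6(t-6i)-2i(m-2i-1)-3(m-2i)-2(7-i)(m-2i).
\]
Expanding, the $i$-dependent terms cancel and this collapses to $6t-17m$. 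Since the $i=0$ instance of the same computation yields $[F_t]\cdot h_{7_1}=6t-17m$, the hypothesis $[F_t]\cdot h_{7_1}<0$ is equivalent to $6t-17m<0$, whence $([F_t]-h_{7_1}-\cdots-h_{7_i})\cdot h_{7_{i+1}}=6t-17m<0$ for every $i=1,\dots,7$, which is the assertion.

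The only delicate point — hardly an obstacle — is the bookkeeping in the reduced class: one must verify that the distinguished index $i+1$ of $h_{7_{i+1}}$ lies among the indices still carrying coefficient $-(m-2i)$ rather than $-(m-2i-1)$, since that is exactly what forces the $i$-dependent terms to cancel at the last step. Everything else is a routine intersection-number calculation, entirely parallel to the $r=6$ and $r=7$ cases, and no new ideas are needed.
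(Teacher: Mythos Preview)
Your proposal is correct and follows exactly the approach the paper intends: the paper omits the proof of this lemma, but the analogous Lemma~\ref{lem:ifonethenall6} is proved by precisely the same explicit reduction and intersection computation you carry out, yielding the $i$-independent quantity $6t-17m$. Your bookkeeping and arithmetic are accurate.
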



We will denote the sum of all eight permutations of $h_7$ by $[Y]$ and call it a \textit{cycle}.   Note that 
$$Y=48e_0 - 17(e_1+\cdots +e_8).$$

Following Procedure \ref{proc:findH}, we subtract off $6m'$ full cycles from $[{F}_{48\cdot6m'}]$ to get $H_{48\cdot 6m'}$.  Then
\begin{eqnarray*}
[F_{48\cdot 6m'}] - 6m'[Y] &=& (48\cdot 6m' -48 \cdot 6m')e_0 - (17\cdot 6m'-17\cdot 6m')(e_1+\cdots +e_8)\\
 &=& 0e_0-0(e_1+\cdots +e_7).
\end{eqnarray*} 

Therefore, $h^0(X, \mathcal{F}_{48\cdot 6m'}) = h^0(X, 0) = 1$ and $\alpha(m) = 48\cdot 6m' = \frac{48m}{17}$ in this case.

\textbf{Step 4:}  Again, in this section we will assume that 6 and 17 divide $m$, so $m = 17 \cdot 6m'$ for some integer $m'$ and $N = \frac{17}{6}m= 17^2m'$.
We now want to show that the number of monomials in only $x$ and $y$ in $\text{gin}(I^{(m)})_{N+1}$ is $N+2$ and that the number of monomials of degree $N$ in the variables $x$ and $y$ in $\text{gin}(I^{(m)})$ is less than $N+1$.  This will show that the highest degree generator occurs in degree $N+1$.

By Lemma \ref{lem:moninz}, there are $H_Z(N+1) -H_Z(N)$ monomials in $\text{gin}(I^{(m)})$ that involve only $x$ and $y$.  Then
$$H_Z(N+1) - H_Z(N) = {N+2 \choose 1} = N+2$$
Thus, $\text{gin}(I_{Z_m})_{N+1}$ contains all monomials of degrees $N+1$ in the variables $x$ and $y$.

Now we need to determine $H_Z(N) - H_Z(N-1)$ and show that it is less than $N+1$ (that is, $\text{gin}(I_{Z_m})_N$ does not contain all monomials in $x$ and $y$ of degree $N$).  

Consider 
$$F_{N-1} = (17^2m'-1)L - 17 \cdot 6m'(E_1+\cdots +E_8).$$
Recall that one cycle is equal to $[Y]=48e_0 - 17(e_1+\cdots +e_7).$  It is easy to see that exactly 6 cycles can be subtracted off of $[F_{N-1}]$ to obtain $[H_{N-1}]$.  We have
$$[H_{N-1}] = (17^2m'-1-6\cdot 48)e_0 - (17\cdot 6m'-17\cdot 6)(e_1+ \cdots + e_8)$$
so, by Corollary \ref{cor:h0ofFtNEFF},
$$h^0(X, \mathcal{F}_t) = \frac{289}{2}m'^2-\frac{527}{2}m'+120$$

From Step 1 we know that ${F}_N$ is numerically effective so
$$h^0(X, \mathcal{F}_N) =  \frac{289}{2}m'^2+\frac{51}{2}m'+1$$

Thus, 
$$H_Z(N)-H_Z(N-1) = 238m'-119<N+1 = 289m'+1$$
 and not all monomials in $x$ and $y$ of degree $N$ are contained in $\text{gin}(I^{(m)})$.  Therefore, the largest degree generator of $\text{gin}(I_{Z_m})$ is of degree $N+1 = \frac{17}{6}m+1$ when $m$ is divisible by $17\cdot 6$.

\textbf{Step 5:}  By Step 4, the highest degree generator of $\text{gin}(I^{(m)})$ is of degree $\frac{17}{6}m+1$ when $m$ is divisible by $17\cdot 6$.  By Step 3, the smallest degree element of $\text{gin}(I^{(m)})$ is of degree $\alpha(m) = \frac{48m}{17}$ when $m$ is divisible by $17 \cdot 6$.  Thus, the intercepts of the limiting shape of the symbolic generic initial system of $I$ are $(0, \frac{17}{6})$ and $(\frac{48}{17},0)$.  Since
$$\frac{17}{6} \cdot \frac{48}{17} = 8,$$
Corollary \ref{cor:InterceptsDetermineShape} tells us that the limiting polytope is defined by the line through the intercepts and is as claimed in Theorem \ref{thm:polytopefor678}.

\section{The Symbolic Generic Initial System of 5 or Fewer Uniform Fat Points in General Position}
\label{sec:smallnumber}

In this section we prove part (c) of the main theorem.

\newtheorem*{thm:mainthmc}{Theorem \ref{thm:mainthm} (c)}
\begin{thm:mainthmc}
\label{thm:ShapeofFiveorLess}
Suppose that $1<r \leq 5$ and $I$ is the ideal of $r$ points in general position.  Then the limiting polytope of the symbolic generic initial system $\{\text{gin}(I^{(m)})\}$ has a boundary defined by the line through the points $(2,0)$ and $(0, \frac{r}{2})$ when $\frac{r}{2} \geq 2$ and $(\frac{r}{2},0)$ and $(0, 2)$ when $\frac{r}{2}<2$.
\end{thm:mainthmc}

This is an immediate consequence of the following result of \cite{Mayes12d} since five or fewer points of $\mathbb{P}^2$ in general position lie on an irreducible conic.

\begin{thm}
Suppose that $I$ is the ideal of $r$ points in $\mathbb{P}^2$ lying on an irreducible conic.  Then the limiting polytope of the symbolic generic initial system $\text{gin}(I^{(m)})$ has a boundary defined by the line through the points $(2,0)$ and $(0, \frac{r}{2})$ when $\frac{r}{2} \geq 2$ and $(\frac{r}{2},0)$ and $(0, 2)$ when $\frac{r}{2}<2$.
\end{thm}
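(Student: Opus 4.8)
The plan is to reduce everything to Corollary~\ref{cor:InterceptsDetermineShape}. By the discussion preceding that corollary, the $x$-intercept of $\partial P$ is $\gamma_1=\lim_{m}\alpha(m)/m$ and its $y$-intercept is $\gamma_2=\lim_m\zeta(m)/m$, where $x^{\alpha(m)}$ and $y^{\zeta(m)}$ are the power-of-$x$ and power-of-$y$ minimal generators of $\text{gin}(I^{(m)})$ supplied by Corollary~\ref{cor:formofgin} and $\alpha(m)=\alpha(I^{(m)})$ is the initial degree (both limits exist because the scaled Newton polytopes are nested). It therefore suffices to check $\gamma_1\gamma_2=r$, and I will in fact show $\gamma_1=\min(2,r/2)$ and $\gamma_2=\max(2,r/2)$, so that $\gamma_1\gamma_2=2\cdot(r/2)=r$; this both verifies the hypothesis of Corollary~\ref{cor:InterceptsDetermineShape} and reproduces the two cases $r/2\ge2$ and $r/2<2$ of the statement.

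First I would bound the intercepts from above. Let $C$ be the irreducible conic through the points; being irreducible it is smooth, so $C\cong\mathbb{P}^1$. Since $C^m$ has degree $2m$ and vanishes to order $m$ at each point, $\alpha(m)\le2m$, giving $\gamma_1\le2$; moreover for $r=2$ the $m$-th power of the line through the two points lies in $I^{(m)}$ and has degree $m$, while for $r=3$ the $(m/2)$-th power (for even $m$) of the product of the three lines joining pairs of points lies in $I^{(m)}$ and has degree $3m/2$ (the three points cannot be collinear, since a line meets $C$ in at most two points), so $\gamma_1\le r/2$ when $r\le3$. Hence $\gamma_1\le\min(2,r/2)$. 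For $\gamma_2$: since $\text{gin}(I^{(m)})$ is Borel-fixed and generated in $x,y$ (Proposition~\ref{prop:geninxy}), once it contains one monomial of degree $t$ that is a power of $y$ it contains all monomials of degree $t$ in $x,y$; together with Lemma~\ref{lem:moninz} this shows that as soon as $Z_m$ imposes independent conditions in two consecutive degrees $t-1,t$ the ideal $\text{gin}(I^{(m)})$ contains $y^t$, so $\zeta(m)\le\operatorname{reg}(I^{(m)})$. To bound the regularity I would use the residuation exact sequence attached to $C$: since $Z_m:C=Z_{m-1}$ and the trace of $Z_m$ on $C\cong\mathbb{P}^1$ is the divisor $m(p_1+\cdots+p_r)$ of degree $rm$, it reads
$$0\longrightarrow\mathcal{I}_{Z_{m-1}}(t-2)\stackrel{\cdot C}{\longrightarrow}\mathcal{I}_{Z_m}(t)\longrightarrow\mathcal{O}_{\mathbb{P}^1}(2t-rm)\longrightarrow0.$$
Passing to cohomology and inducting on $m$ (base case $Z_0=\varnothing$, for which every twist has vanishing $H^1$ on $\mathbb{P}^2$), one finds $H^1(\mathcal{I}_{Z_m}(t))=0$ as soon as $H^1(\mathcal{O}_{\mathbb{P}^1}(2(t-2j)-r(m-j)))=0$ for all $0\le j\le m-1$, that is, as soon as $2t\ge rm-1+j(4-r)$ for all such $j$; the binding index is $j=0$ when $r\ge4$ and $j=m-1$ when $r\le3$, so $\operatorname{reg}(I^{(m)})\le\max(r/2,2)\,m+O(1)$ and hence $\gamma_2\le\max(2,r/2)$.

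Finally I would combine these estimates with the volume identity. Lemma~\ref{lem:volume} gives $\operatorname{vol}(Q)=r/2$; on the other hand $P$ is convex and carries $(\gamma_1,0)$ and $(0,\gamma_2)$ on its boundary, so $Q$ lies in the triangle with vertices $(0,0),(\gamma_1,0),(0,\gamma_2)$, whence $r/2=\operatorname{vol}(Q)\le\gamma_1\gamma_2/2$, i.e. $\gamma_1\gamma_2\ge r$. Then $\min(2,r/2)\ge\gamma_1\ge r/\gamma_2\ge r/\max(2,r/2)=\min(2,r/2)$ forces $\gamma_1=\min(2,r/2)$, and symmetrically $\gamma_2=\max(2,r/2)$ with $\gamma_1\gamma_2=r$; Corollary~\ref{cor:InterceptsDetermineShape} then yields exactly the claimed boundary of $P$. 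The one step requiring genuine work is the cohomological regularity bound in the second paragraph; the small-degree constructions and the volume comparison are routine, and the same computation incidentally gives $\epsilon(I)=1/\max(2,r/2)$, consistent with the $x$-intercept being $r\epsilon(I)$.
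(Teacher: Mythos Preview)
Your argument is correct. Note, however, that the paper does not actually prove this statement: it is quoted from \cite{Mayes12d} and invoked solely to deduce Theorem~\ref{thm:mainthm}(c). So there is no in-paper proof to compare against directly; what you have supplied is a self-contained proof built from the machinery of the present paper (Corollary~\ref{cor:formofgin}, Lemma~\ref{lem:volume}, Corollary~\ref{cor:InterceptsDetermineShape}) together with a standard residuation argument along the conic.

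The structure of your proof is exactly the template the paper uses elsewhere: bound $\gamma_1$ and $\gamma_2$ from above, then let the volume identity $\mathrm{vol}(Q)=r/2$ force equality. The new ingredient you bring in is the residuation sequence with respect to the smooth conic $C$, which is the natural replacement here for the blow-up computations of Section~\ref{sec:678}. Your inductive $H^1$ bound is clean, and the dichotomy between $r\ge4$ (binding index $j=0$, regularity governed by the conic) and $r\le3$ (binding index $j=m-1$, regularity governed by lines) is exactly what produces the two cases of the statement. Two minor remarks: first, your inequality $\zeta(m)\le\operatorname{reg}(I^{(m)})$ is in fact an equality by Bayer--Stillman, since for a Borel-fixed ideal generated in $x,y$ the regularity is the top generator degree; this would let you shorten the middle paragraph. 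Second, the argument as written needs $r\ge2$ (for $r=1$ one has $\gamma_1=\gamma_2=1$, so the stated conclusion fails), but this boundary case is excluded in Theorem~\ref{thm:mainthm} anyway.
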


\section{Final Example}

The results presented here and in \cite{Mayes12a} may lead the reader to believe that the limiting polytope of any symbolic generic initial system is defined by a single hyperplane.  The following example shows that this does not hold even for ideals of points in $\mathbb{P}^2$.

\begin{example}
\label{example:DifferentShape}
Suppose that $I$ is the ideal of the $l+1 \geq 4$ points $p_1, \dots, p_l, p_{l+1}$ of $\mathbb{P}^2$ where $p_1, \dots p_l$ lie on a line and $p_{l+1}$ lies off of the line.  

\begin{prop}
\label{prop:lpointson}
Let $I$ be the ideal of $l+1$ distinct points of $\mathbb{P}^2$ where $l$ of the points lie on a line and suppose that $l(l-1)$ divides $m$.  Then the highest degree generator of $\text{gin}(I^{(m)})$ is of degree $lm$ and the lowest degree generator of $\text{gin}(I^{(m)})$ is of degree $2m-\frac{m}{l}$.
\end{prop}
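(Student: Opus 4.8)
The plan is to compute $\text{gin}(I^{(m)})$ by first understanding the Hilbert function $H_{I^{(m)}}(t)$ via blow-up geometry, exactly as in Section~\ref{sec:678}, and then translating Hilbert function data into generators of the generic initial ideal using Lemma~\ref{lem:moninz} and Corollary~\ref{cor:formofgin}. Here $X$ is the blow-up of $\mathbb{P}^2$ at the $l+1$ points, with exceptional divisors $E_1,\dots,E_l$ over the collinear points and $E_{l+1}$ over the off-line point, and $F_t = tL - m(E_1+\cdots+E_{l+1})$. The new geometric feature compared to the general-position case is the line through $p_1,\dots,p_l$: its strict transform is the class $A = e_0 - e_1 - \cdots - e_l$, with $A^2 = 1-l < 0$ once $l\geq 2$, so $A$ is a fixed component of many linear systems. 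First I would identify which curve classes obstruct numerical effectivity of $F_t$: the relevant negative classes are $A$ (or rather the reducible total transform of the line) and the exceptional curves $e_0 - e_i - e_{l+1}$ for $i\leq l$. Intersecting, $F_t \cdot A = t - lm$, so for $t < lm$ the line (with multiplicity) splits off; after peeling off copies of the line one reduces to a class supported away from $p_1,\dots,p_l$, which amounts to fat points at two points ($p_{l+1}$ and, effectively, a reduced structure), and one can compute $h^0$ explicitly.

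The key steps, in order: (1) Run Procedure~\ref{proc:findH}, or an explicit ad hoc version of it, to compute $h^0(X,F_t)$ for all $t$. For $t$ slightly below $lm$ the line class $A$ has negative intersection with $F_t$ and must be subtracted repeatedly; I expect that subtracting $A$ exactly $lm - t$ times (when $t$ is in the right range) leaves a class with no further negative intersections, giving a clean closed form for $h^0$. One should find that $H_{I^{(m)}}(t) = 0$ for $t < 2m - m/l$ and that $H_{I^{(m)}}(2m - m/l) > 0$, establishing $\alpha(m) = 2m - m/l$; the value $2m - m/l$ is exactly where the ``two points worth'' of conditions coming from $p_{l+1}$ together with the line contribution first admit a curve (a union of the line taken with multiplicity $m/l$ roughly, plus conics through the remaining fat structure). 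This pins down the lowest-degree generator. (2) For the top generator, determine the smallest $t = N$ such that $F_t$ is numerically effective; intersecting $F_t$ with $A$ gives $t \geq lm$, and one checks the exceptional classes $e_0 - e_i - e_{l+1}$ give the weaker bound $t \geq 2m$. So $N = lm$ (since $l \geq 3$), and for $t \geq lm$ we get $h^0(X,F_t) = \binom{t+2}{2} - (l+1)\binom{m+1}{2}$ minus a correction? — no: for $t\geq lm$ one must be careful whether $F_t$ is genuinely nef; I would verify $F_t \cdot C \geq 0$ for all exceptional $C$ and conclude via Lemma~\ref{lem:h0ofNEFF}. Then, as in Step~4 of Section~\ref{sec:678}, compute $H_{I^{(m)}}(N+1) - H_{I^{(m)}}(N) = N+2$, so all monomials of degree $N+1$ in $x,y$ lie in $\text{gin}(I^{(m)})$, while $H_{I^{(m)}}(N) - H_{I^{(m)}}(N-1) < N+1$, so not all degree-$N$ monomials do; hence the top generator has degree $N = lm$. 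Wait — the claimed value is $lm$, not $lm+1$, so I would instead show $H_{I^{(m)}}(lm) - H_{I^{(m)}}(lm-1) = lm+1$ (all of degree $lm$) but $H_{I^{(m)}}(lm-1) - H_{I^{(m)}}(lm-2) < lm$; i.e. the jump to full occurs exactly at $lm$. (3) Combine: the divisibility hypothesis $l(l-1)\mid m$ guarantees $2m - m/l$ and the intermediate exceptional-curve subtractions all occur in integers, so the computations above are valid without floors.

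The main obstacle I anticipate is Step~(1): correctly running the Zariski-type decomposition when the fixed part is the \emph{reducible} curve consisting of the line through $p_1,\dots,p_l$ together with the $(-1)$-curves $e_0-e_i-e_{l+1}$, since these interact and the order of subtraction matters; getting the exact multiplicity with which the line splits off, and verifying that what remains is genuinely nef, is the delicate bookkeeping. A secondary subtlety is confirming that the line class $A$, though not itself an exceptional $(-1)$-class (it has self-intersection $1-l$), can still be used in the $h^0$-invariance argument — this needs the analogue of Lemma~\ref{lem:Equalh0} for a curve $C$ with $C \cong \mathbb{P}^1$ and arbitrary negative self-intersection, where the same cohomology sequence $0 \to \mathcal{O}_X(-C) \to \mathcal{O}_X \to \mathcal{O}_C \to 0$ tensored by $\mathcal{O}_X(F)$ gives $h^0(X,F) = h^0(X,F-C)$ whenever $[F]\cdot[C] < 0$, since then $h^0(\mathbb{P}^1,\mathcal{O}(F\cdot C)) = 0$; this works verbatim. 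Once these geometric points are settled, the passage to $\text{gin}(I^{(m)})$ is routine via Lemma~\ref{lem:moninz} and Corollary~\ref{cor:formofgin}, mirroring Section~\ref{sec:678Proof}.
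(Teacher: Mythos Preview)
Your proposal is correct and follows essentially the same approach as the paper, which itself only gives an ``Idea of Proof'' pointing to the methods of Section~\ref{sec:678} together with the relevant negative curves on the blow-up, namely $[L-E_1-\cdots-E_l]$ and $[L-E_i-E_{l+1}]$ for $i=1,\dots,l$ --- exactly the classes you identify. Your observation that the line class $A=e_0-e_1-\cdots-e_l$ is not a $(-1)$-curve but that the argument of Lemma~\ref{lem:Equalh0} extends verbatim to any smooth rational $C$ with $[F]\cdot[C]<0$ is precisely the point the paper's sketch leaves implicit.
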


\begin{proof}[{Idea of Proof}]
The proof of this proposition is similar to the work contained in Section \ref{sec:678} with the following considerations.  In this case, the blow-up $\pi: X \rightarrow \mathbb{P}^2$ of $p_1, \dots, p_{l+1}$ has exceptional curves with classes $[L-E_1-E_2 - \cdots -E_l]$ and $[L-E_i-E_{l+1}]$ for $i=1, \dots, l$ where $E_j = \pi^{-1}(p_j)$ and $L$ is the total transform of a general line in $\mathbb{P}^2$ (note that the exceptional curves are the total transforms of lines through the points $\mathbb{P}^2$; see \cite{Harbourne98}).  
\end{proof}

If $P$ is the limiting shape of the symbolic generic initial system $\{ \text{gin}(I^{(m)}) \}_m$, then Proposition \ref{prop:lpointson} implies that the boundary of $P$ has $y$-intercept 
$$\lim_{ m \rightarrow \infty} \frac{lm}{m} = l$$
and $x$-intercept
$$\lim_{m \rightarrow \infty} \frac{2m- \frac{m}{l}}{m} = 2- \frac{1}{l}.$$

If the boundary of $P$ was defined by the line through these intercepts, the volume under of $P$ would be 
$$\text{vol}(Q) = \frac{(l)(2-\frac{1}{l})}{2} = l -\frac{1}{2}.$$
However, by Lemma \ref{lem:volume}, the volume under of $P$ must be $\frac{l+1}{2}$ which is strictly smaller than $l-\frac{1}{2}$ ($l \geq 3$).  Thus, $P$ is not defined by the line through the intercepts.  In fact, one can prove that the limiting polytope $P$ is the one shown in Figure \ref{fig:lpointsonpolytope}.

\end{example}

\vspace{0.2in}

\begin{figure}
\begin{center}
\includegraphics[width=4.5cm]{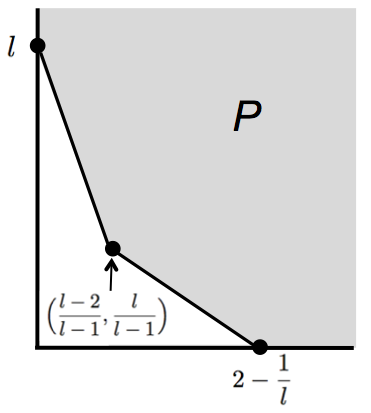}
\end{center}
\caption{The limiting polytope $P$ of the symbolic generic initial system of the ideal of $l$ points on a line and one point off.}
\label{fig:lpointsonpolytope}
\end{figure}

\bibliography{GeneralPositionBib}
\bibliographystyle{amsalpha}
\nocite{*}

\end{document}